\newtheorem{theorem}{Theorem}[section]
\newtheorem{lema}[theorem]{Lemma}
\newtheorem{cor}[theorem]{Corollary}
\newtheorem{prop}[theorem]{Proposition}
\theoremstyle{definition}
\newtheorem{defi}[theorem]{Definition}
\newtheorem{remark}[theorem]{Remark}
\numberwithin{equation}{section}
\newcommand{\R}{\mathbb{R}}
\newcommand{\Z}{\ensuremath{\mathbb{Z}}}
\begin{document}

\title[The BNS invariants of the generalized solvable Baumslag-Solitar groups]{The BNS invariants of the generalized solvable Baumslag-Solitar groups and of their finite index subgroups}
\author{Wagner Sgobbi}
\address{UFSCar, S\~{a}o Carlos SP, Brasil}
\email{wagnersgobbi@dm.ufscar.br}
\author{Peter Wong}
\address{Bates College, Department of Mathematics, Lewiston ME 04240, USA}
\email{pwong@bates.edu}

\date{\today}

\keywords{$Sigma$ invariants, $R_{\infty}$, generalized solvable Baumslag-Solitar groups}
\subjclass[2020]{Primary: 20F65; Secondary: 20E45}

\begin{abstract}
We compute the Bieri-Neumann-Strebel invariants $\Sigma^1$ for the generalized solvable Baumslag-Solitar groups $\Gamma_n$ and their finite index subgroups. Using $\Sigma^1$, we show that certain finite index subgroups of $\Gamma_n$ cannot be isomorphic to $\Gamma_{k}$ for any $k$. In addition, we use the BNS-invariants to give a new proof of property $R_\infty$ for the groups $\Gamma_n$ and their finite index subgroups.
\end{abstract}

\maketitle

\section{Introduction}

The Bieri-Neumann-Strebel invariant $\Sigma^1(G)$ \cite{BNS} of a finitely generated group $G$ is an important object of study in geometric group theory and has many connections to other areas of mathematics, especially with the Thurston norm in low dimensional topology. However, the computation of $\Sigma^1$ is very difficult in general and there are only few classes of groups for which $\Sigma^1$ is known (see e.g. \cite{KL} and the references therein). 

A group $G$ is said to have property $R_\infty$ if $R(\varphi)$ is infinite for every automorphism $\varphi \in Aut(G)$. Here, $R(\varphi)$ is the number of twisted conjugacy classes of $\varphi$, that is, the number of equivalence classes in $G$ given by the relation $g \sim h \Leftrightarrow zg\varphi(z)^{-1}=h$ for some $z \in G$.
Twisted conjugacy classes are important in topological fixed point theory. 

Let $X$ be a space with universal covering $\tilde{X}$ and $f:X \to X$ be a homeomorphism with induced automorphism $f_*:\pi_1(X) \to \pi_1(X)$. Then $R(f_*)$ is actually the number of (topological) lifting classes of $f$ in $\tilde{X}$ given by a deck transformation conjugation, which also partitions the fixed points of $f$ in $X$. This number is an upper bound for the Nielsen number $N(f)$, which is a sharp lower bound for the minimal number of fixed points in the homotopy class $[f]$ and one of the main objects of study in Nielsen Theory (see \cite{Jiang}). For instance in \cite{DaciWong}, property $R_\infty$ was used to show that for any $n \geq 5$, there exists a $n$-dimensional nilmanifold $M$ such that every self-homeomorphism $f:M \to M$ is isotopic to be fixed point free. 

The motivation for this work is \cite{TabackWongGamman} in which J. Taback and P. Wong showed property $R_\infty$ for the generalized solvable Baumslag-Solitar groups $\Gamma_n$ and for every group quasi-isometric to $\Gamma_n$, using geometric group theoretic techniques. In \cite{DaciDess}, D. Gon\c{c}alves and D. Kochloukova used the Bieri-Neumann-Strebel (BNS or $\Sigma^1$) invariant to deduce property $R_\infty$ for certain classes of groups, including a new proof of the property $R_{\infty}$ for the Thompson's group $F$. Since the $\Sigma$-invariants of the Baumslag-Solitar groups $BS(1,n)$ are sufficient to guarantee property $R_\infty$, it is natural to ask whether property $R_\infty$ for $\Gamma_n$ and for their finite index subgroups can also be deduced using $\Sigma^1$.

In this paper, we show that the property $R_{\infty}$ for $\Gamma_n$ and for their finite index subgroups can be deduced from their respective BNS-invariants. Here we compute the 
$\Sigma^1$ invariants of $\Gamma_n$ and of all its finite index subgroups $H$. We show that these invariants lie in an open hemisphere of the corresponding character spheres so that property $R_{\infty}$ follows from \cite{DaciDess}. Furthermore, we extend the result to any finite direct product of these groups. Using $\Sigma^1$, we show that there exist finite index sugbroups of $\Gamma_n$ that cannot be isomorphic to any $\Gamma_{k}$, in contrast to the fact that every finite index subgroup of a solvable Baumslag-Solitar group $BS(1,n)$ is again a $BS(1,k)$.

The paper is organized as follows. In section $2$ we compute the $\Sigma^1$ for $\Gamma_n$ (Theorem \ref{meusigmagamman}). In section $3$, we classify all the finite index subgroups $H$ of $\Gamma_n$ in terms of specific generators and index (Theorem \ref{fisgamman}), and give a presentation of $H$ (Theorem \ref{presforh}). Then we compute their $\Sigma^1$ invariant (Theorem \ref{meusigmah}) and use it to show that some $H$ cannot be a generalized solvable Baumslag-Solitar group (Theorem \ref{hnotgamman}). In section $4$ we use geometric arguments about the behavior of the induced homeomorphisms $\varphi^*:S(G) \to S(G)$ to show that finding some special invariant convex polytopes in the character sphere of a finitely generated group $G$ is sufficient to guarantee property $R_\infty$ for $G$. In section $5$, we give new proofs (Theorems \ref{meurinftygamman} and \ref{rinftyforh}) of property $R_\infty$ for the groups $\Gamma_n$ and $H$ above and also for any finite direct product of them (Theorem \ref{meurinfproddir}). Finally, in Proposition \ref{example}, we exhibit a family of groups $G$ where Theorem \ref{meuteopolytope} can be used to guarantee property $R_{\infty}$ without complete information on $\Sigma^1(G)$.

\section*{Acknowledgements} This paper is part of the first author's Ph.D. project, under the supervisions of Prof. Daniel Vendr\'uscolo (UFSCar - Brazil) and the second author, Prof. Peter Wong. The first author wants to thank both supervisors for their guidance, Bates College (Lewiston-ME, USA) for the acceptance of the project and Funda\c{c}\~ao de Amparo \`a Pesquisa do Estado de S\~ao Paulo (FAPESP) for the financial support during the research through processes 2017/21208-0 and 2019/03150-0. We thank Prof. D. Kochloukova for pointing out the earlier work \cite{BS} of Bieri-Strebel which simplifies the proof of Theorem \ref{meusigmagamman}.

\section{Computation of $\Sigma^1(\Gamma_n)$}

In this section we compute the $\Sigma^1$ invariants of the generalized solvable Baumslag-Solitar groups $\Gamma_n$.
First we recall the definition of the BNS-invariant $\Sigma^1(G)$ of a finitely generated group $G$. There are other equivalent definitions (see \cite{BNS} and \cite{Strebel}) but we employ the following for our purposes.

\begin{defi}
Let $G$ be a finitely generated group. The character sphere of $G$ is the quotient space
\[
S(G)=(Hom(G,\mathbb{R})-\{0\})/\sim \ = \{[\chi]\ |\ \chi \in Hom(G,\mathbb{R})-\{0\}\},
\] where $\chi \sim \chi' \Leftrightarrow r\chi=\chi'$ for some $r>0$.
\end{defi}

It is well known that if the free rank of the abelianized group $G^{ab}$ is $n$ with generators $x_1,...,x_n$, then $S(G) \simeq S^{n-1}$ with homeomorphism
\begin{align*}
  &\mathfrak h:S(G)\longrightarrow S^{n-1} \\
  &[\chi]\longmapsto \frac{(\chi(x_1),...,\chi(x_n))}{\Vert (\chi(x_1),...,\chi(x_n)) \Vert}.\\
\end{align*}

Following \cite{Strebel}, we have

\begin{defi}
Let $G$ be a finitely generated group with finite generating set $X \subset G$. Denote by $\Gamma=\Gamma(G,X)$ the Cayley graph of $G$ with respect to $X$. The first $\Sigma$-invariant (or $BNS$ invariant) of $G$ is
\[
\Sigma^1(G)=\{[\chi] \in S(G)\ |\ \Gamma_\chi\ \text{is connected}\},
\] where $\Gamma_\chi$ is the subgraph of $\Gamma$ whose vertices are the elements $g \in G$ with $\chi(g) \geq 0$ and whose edges are those of $\Gamma$ which connect two such vertices.
\end{defi}

The solvable Baumslag-Solitar group $BS(1,n), n>1$ is defined by the presentation
$$
BS(1,n)=\langle a,t \mid tat^{-1}=a^n \rangle.
$$

We consider the following solvable generalization of $BS(1,n)$.

\begin{defi}\label{defgamman}
Let $n\geq 2$ be a positive integer with prime decomposition $n={p_1}^{y_1}...{p_r}^{y_r}$, the $p_i$ being pairwise distinct. We define the solvable generalization of the Baumslag-Solitar group by
\[
\Gamma_n=\left<a,t_1,...,t_r\ |\ t_it_j=t_jt_i,\ i\neq j,\ t_ia{t_i}^{-1}=a^{{p_i}^{y_i}},\ i=1,...,r \right>.
\]

More generally, let $S=\{n_1,...,n_r\}$ be a set of pairwise coprime positive integers such that $n_i\geq 2$ for some $i$. Define
\[
\Gamma(S)=\left<a,t_1,...,t_r\ |\ t_it_j=t_jt_i,\ i\neq j,\ t_ia{t_i}^{-1}=a^{n_i},\ i=1,...,r \right>.
\]
\end{defi}
The group $\Gamma(S)$ is always torsion-free.

Note that $BS(1,n)$ is a metabelian group and it admits the following splitting
$$
1 \to \mathbb Z\left[\frac{1}{n}\right] \to BS(1,n) \stackrel{\dashleftarrow}{\to} \mathbb Z \to 1.
$$
where $\mathbb Z\left[\frac{1}{n}\right]$ denotes the $n$-adic rationals and contains the commutator subgroup $[BS(1,n), BS(1,n)]$. Similarly,
$\Gamma_n$ is characterized by the following short exact sequence
\begin{equation}\label{splitexact}
1 \to \mathbb{Z}\left[\frac{1}{n}\right] \to \Gamma_n \stackrel{\varphi}{\to} \mathbb{Z}^r \to 1.
\end{equation}
Here, $\varphi$ is the canonical projection with $a \mapsto 1$, $\mathbb{Z} \left[ \frac{1}{n} \right]= \langle a_j,\  j \in \mathbb{Z}\ |\ a_j^n=a_{j+1},\  j \in \mathbb{Z} \rangle $ and is generated by the elements
\[
a_j=(t_1...t_r)^j a(t_1...t_r)^{-j} \in \Gamma_n.
\] Using the presentation $\mathbb{Z}^r=\langle t_1,...,t_r \ |\ t_it_j=t_jt_i, i \neq j \rangle$, the exact sequence \eqref{splitexact} splits using the section $\mathbb{Z}^r \to \Gamma_n$ sending $t_i \mapsto t_i$. Thus, $\Gamma_n = \mathbb{Z}\left[\frac{1}{n}\right] \rtimes \mathbb{Z}^r$ is the semidirect product of these two subgroups, and every element $w \in \Gamma_n$ can be uniquely written as $w=t_1^{\alpha_1}...t_r^{\alpha_r}u$ for $u \in \mathbb{Z}\left[\frac{1}{n}\right]$ and $\alpha_i \in \mathbb{Z}$ (we put $u$ on the right side following the notation from Bogopolski in \cite{Bogopolski}). Observe that the ``$t_i$-coordinates'' in $\Gamma_n$ are well behaved, that is, $(t_1^{\alpha_1}...t_r^{\alpha_r}u)(t_1^{\beta_1}...t_r^{\beta_r}u')=t_1^{\alpha_1+\beta_1}...t_r^{\alpha_r+\beta_r}u''$ for some $u'' \in \mathbb{Z}\left[\frac{1}{n}\right]$. Secondly, because of the presentation of the subgroup $\mathbb{Z}\left[\frac{1}{n}\right]$, we see that any two generators $a_i,a_j$ must be powers of the common generator $a_{\min\{i,j\}}$. Note that $\mathbb{Z}\left[\frac{1}{n}\right]$ is an infinitely generated abelian group and $\Gamma_n$ is metabelian.

\begin{theorem}\label{meusigmagamman}
The complement $\Sigma^1(\Gamma(S))^c$ of the $\Sigma^1$ of the group
\[
\Gamma(S)=\left<a,t_1,...,t_r\ |\ t_it_j=t_jt_i,\ i\neq j,\ t_ia{t_i}^{-1}=a^{n_i},\ i=1,...,r \right>
\] is given by
\[
\Sigma^1(\Gamma (S))^c=\{[\chi_i]\ |\chi_i(t_i)=1 \text{and~} \chi_i(t_j)=0 \text{~for~} j\neq i\},
\] In particular, if $n=p_1^{y_1}...p_r^{y_r}$ is a prime decomposition, then 
\[
\Sigma^1(\Gamma_n)^c=\{[\chi_1],...,[\chi_r]\}.
\]
Furthermore, $\Sigma^1(\Gamma(S))^c$ lies inside an open hemisphere in $S(\Gamma(S))$.
\end{theorem}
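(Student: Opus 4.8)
The plan is to avoid any direct analysis of the Cayley graph $\Gamma_\chi$ and instead pass to the module-theoretic invariant of Bieri--Strebel. Writing $A=\mathbb{Z}[1/n]$ and $Q=\mathbb{Z}^r$, the splitting \eqref{splitexact} exhibits $\Gamma(S)$ as the metabelian group $A\rtimes Q$ in which $t_i$ acts on $A$ by multiplication by $n_i$; as a $\mathbb{Z}Q$-module $A$ is cyclic, namely $A\cong \mathbb{Z}Q/(t_1-n_1,\dots,t_r-n_r)$, hence finitely generated over $\mathbb{Z}Q$. First I would record that every character kills $A$: from $t_iat_i^{-1}=a^{n_i}$ one gets $(n_i-1)\chi(a)=0$, so $\chi(a)=0$, and likewise $\chi(a_j)=0$ for all the generators $a_j$ of $A$; therefore $\chi$ factors through $Q$ and $S(\Gamma(S))\cong S(\mathbb{Z}^r)\cong S^{r-1}$, a character being recorded by the vector $(\chi(t_1),\dots,\chi(t_r))\neq 0$. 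The Bieri--Strebel theorem then applies: for such a $\chi$ one has $[\chi]\in\Sigma^1(\Gamma(S))$ if and only if $A$ is finitely generated as a module over the monoid ring $\mathbb{Z}Q_{\bar\chi}$, where $Q_{\bar\chi}=\{q\in Q:\bar\chi(q)\ge 0\}$. The sign convention can be fixed once and for all by checking the case $r=1$, i.e.\ $BS(1,n)$, where the single point of the complement is the one with $\chi(t)=1$.

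The heart of the argument is then the purely ring-theoretic computation of when $A$ is finitely generated over $R:=\mathbb{Z}Q_{\bar\chi}$. Let $M_\chi=\{\mathbf a\in\mathbb{Z}^r:\langle \mathbf a,\chi\rangle\ge 0\}$, a submonoid of $\mathbb{Z}^r$, so that the monomials $t^{\mathbf a}$, $\mathbf a\in M_\chi$, act on $A\subset\mathbb{Q}$ by multiplication by $n^{\mathbf a}:=\prod_i n_i^{a_i}$. Because $M_\chi$ is closed under addition, $B:=R\cdot 1$ is the additive subgroup of $\mathbb{Q}$ generated by $\{n^{\mathbf a}:\mathbf a\in M_\chi\}$, which is in fact a subring of $\mathbb{Q}$ containing $\mathbb{Z}$; and every such subring is the localization obtained by inverting exactly those primes $p$ for which it contains an element of negative $p$-adic valuation. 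I would then show that $B=A$ (so $A$ is even cyclic over $R$) precisely when, for each block $i$, some $\mathbf a\in M_\chi$ has $a_i<0$: by pairwise coprimality, $p\mid n_i$ forces $p\nmid n_j$ for $j\neq i$, so such an $\mathbf a$ gives $v_p(n^{\mathbf a})=a_i\,v_p(n_i)<0$ for every prime $p\mid n_i$, inverting all primes dividing $n_i$; ranging over $i$ inverts every prime dividing $n$, whence $B=\mathbb{Z}[1/n]=A$.

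It remains to translate the condition ``for each $i$ some $\mathbf a\in M_\chi$ has $a_i<0$'' into the geometry of $\chi$. Since $M_\chi$ consists of the lattice points of the half-space $H_\chi=\{\mathbf a:\langle\mathbf a,\chi\rangle\ge 0\}$, whose recession cone is $H_\chi$ itself, $M_\chi$ fails to contain a point with $a_i<0$ exactly when $H_\chi\subseteq\{a_i\ge 0\}$, i.e.\ exactly when $\chi$ is a positive multiple of the $i$-th coordinate functional, that is $[\chi]=[\chi_i]$. For the necessity direction at $[\chi_i]$ I would argue directly that $A$ is not finitely generated over $\mathbb{Z}Q_{\chi_i}$: there $t_i$ occurs only with nonnegative exponent and no other generator changes $v_p$ for $p\mid n_i$, so for any finite $F\subset A$ the quantity $\min_{x\in F}v_p(x)$ bounds $v_p$ from below on $R\cdot F$, while $A$ contains elements of arbitrarily negative $v_p$. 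Combining, $\Sigma^1(\Gamma(S))^c=\{[\chi_1],\dots,[\chi_r]\}$. Finally, under $\mathfrak h$ the point $[\chi_i]$ goes to the standard basis vector $e_i\in S^{r-1}$, and all of these satisfy $\langle e_i,w\rangle>0$ for $w=\tfrac{1}{\sqrt r}(1,\dots,1)$, so the whole complement lies in the open hemisphere $\{x\in S^{r-1}:\langle x,w\rangle>0\}$.

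The step I expect to be the main obstacle is the finite-generation criterion over the monoid ring, especially the sufficiency direction: one must be sure that being able to decrease each block's valuation separately really recovers all of $\mathbb{Z}[1/n]$, which is where the observation that a subring of $\mathbb{Q}$ is determined by which primes it inverts does the work and replaces any delicate simultaneous-generation bookkeeping. A secondary point to get right is the precise form and orientation of the Bieri--Strebel correspondence, including the verification that its hypotheses --- $Q$ finitely generated abelian and $A$ finitely generated over $\mathbb{Z}Q$ --- indeed hold here.
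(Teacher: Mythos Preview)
Your approach is correct and reaches the same conclusion, but the mechanics differ from the paper's proof in an interesting way. Both routes invoke the Bieri--Strebel theory of \cite{BS} for the metabelian group $A\rtimes Q$; the paper, however, uses the \emph{centralizer} formulation (Proposition~2.1 and formula~(2.3) of \cite{BS}), namely
\[
\Sigma^1(\Gamma(S))=\bigcup_{\lambda\in C(A)}\{[\chi]\in S(\Gamma(S)):\chi(\lambda)>0\},\qquad C(A)=\{\lambda\in\mathbb{Z}[\mathbb{Z}^r]:\lambda\cdot\alpha=\alpha\ \forall\,\alpha\in A\},
\]
and then exhibits explicit centralizer elements case by case: $\lambda=n_it_i^{-1}$ when some $\chi(t_i)<0$, and a more delicate combination $\hat\lambda_M=M\lambda-(M-1)n_{i_1}t_{i_1}^{-1}$ when several coordinates of $\chi$ are positive. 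You instead work directly with the equivalent finite-generation criterion and compute the cyclic submodule $B=\mathbb{Z}Q_\chi\cdot 1\subset\mathbb{Q}$, exploiting the elementary structural fact that any subring of $\mathbb{Q}$ containing $\mathbb{Z}$ is determined by the set of primes it inverts. This purchase is real: the paper's Case~(2) construction of $\hat\lambda_M$ (choosing integer coefficients with $\sum\alpha_jn_{i_j}=1$ and then adjusting $M$) is replaced by the single observation that if for every block $i$ the monoid $M_\chi$ contains some lattice vector with $a_i<0$, then every prime dividing $n$ is inverted in $B$, so $B=A$ is already cyclic over $R$. Your necessity argument at $[\chi_i]$ via the $p$-adic valuation $v_p$ for $p\mid n_i$ plays the same role as the paper's analysis of which $\gamma\in C(A)$ can satisfy $\chi_i(\gamma)>0$, just phrased module-theoretically rather than in terms of centralizers. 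One small point worth making explicit in your write-up: the step from ``$H_\chi\not\subseteq\{a_i\ge 0\}$'' to ``$M_\chi$ contains an \emph{integer} point with $a_i<0$'' is easy but not automatic---take $-e_i$ when $\chi(t_i)\le 0$, and $-e_i+N\,\mathrm{sgn}(\chi(t_j))\,e_j$ for large $N$ when $\chi(t_i)>0$ but some $\chi(t_j)\neq 0$ with $j\neq i$.
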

\begin{proof}
As pointed out in \cite{BNS}, the $\Sigma^1$ coincides with $\Sigma_{G'}$ of \cite{BS}. For the metabelian group $\Gamma(S)$, the quotient $\mathbb Z^r$ is the torsion-free part of the abelianization so that $S(\Gamma (S))=S(\mathbb Z^r)$. It follows from Proposition 2.1 and formula (2.3) of \cite{BS} that 
$$
\Sigma^1(\Gamma(S))=\bigcup_{\lambda \in C(A)} \{[\chi] \in S(\Gamma(S)) \mid \chi(\lambda)>0\}
$$
where $A=Ker \varphi=\mathbb Z\left[\frac{1}{n}\right]$ as a $\mathbb Z[\mathbb Z^r]$-module and $C(A)=\{\lambda \in \mathbb Z[\mathbb Z^r] \mid \lambda \cdot \alpha =\alpha, \text{for all~} \alpha \in A\}$ is the centralizer of $A$. Let $[\chi]\in S(\Gamma (S))$. 

Case (1): If $\chi(t_i)<0$ for some $i, 1\le i\le r$ then we let $\lambda=n_it_i^{-1}$. Note that $n_it_i^{-1}\cdot a=t_i^{-1}a^{n_i}t_i=a$. It follows that $\lambda \in C(A)$ and $[\chi]\in \Sigma^1(\Gamma (S))$. 

Let $I_k=\{i_j \mid 1\le i_1 < ... < i_k \le r\}$, $k\ge 2$, be a subset of the set $I=\{1, 2, ..., r\}$. 

Case (2): If $\chi(t_{i_j})>0$ for $i_j\in I_k$ and $\chi(s)=0$ for $s\in I\setminus I_k$ then we let $\lambda=\sum_{j=1}^k \alpha_j t_{i_j}$ where $\alpha_j$ are integers such that $\alpha_1n_{1} + ... + \alpha_rn_{r}=1$ since $n_{i_1},..., n_{i_r}$ are pairwise relatively prime. It is easy to see that $\lambda \in C(A)$. 

If $\chi(\lambda)>0$ then $[\chi]\in \Sigma^1(\Gamma (S))$. 

Now suppose $\kappa=\chi(\lambda)\le 0$. Without loss of generality, we may assume that $\alpha_{i_1}>0$. Since $n_{i_1}t_{i_1}^{-1} \in C(A)$, it follows that for any integer $M$, $M\lambda - (M-1)(n_{i_1}t_{i_1}^{-1})\cdot a=a^{M-(M-1)}=a$ so that $\hat \lambda_M=M\lambda - (M-1)(n_{i_1}t_{i_1}^{-1}) \in C(A)$. Now, it is straightforward to see that $\chi(\hat \lambda_M)=(M-1)n_{i_1}\chi(t_{i_1}) + M\kappa$. There exists a positive integer $M$ such that $\chi(\hat \lambda_M)>0$. In other words, $[\chi]\in \Sigma^1(\Gamma (S))$.

Now, the set of characters that do not belong to Case (1) or Case (2) is $\{[\chi_i]\}$, where $\chi_i(t_i)=1$ and $\chi_i(t_j)=0$ if $j\neq i$. To see that this set is the complement of $\Sigma^1(\Gamma (S))$, it suffices to show that $[\chi_i]\in \Sigma^1(\Gamma (S))^c$ for each $i$. Observe that if $\gamma=\sum c_jt_j^{q_j} \in C(A)$ then either all $q_j >0$ when $c_j\ne 0$ or for some $j$, $c_j=n_j$ and $q_j=-1$ with $q_i=0$ for $i\ne j$. Thus,  $\chi_i(\gamma)=c_iq_i$ cannot be positive so each $[\chi_i]\notin \Sigma^1(\Gamma (S))$.

\end{proof}

\begin{remark}  In an earlier version of this paper, Theorem \ref{meusigmagamman} was first proved using a general geometric argument \cite[Theorem A3.1]{Strebel}.
\end{remark}

For the remaining of this paper, we focus on the groups $\Gamma_n$.

\section{Finite index subgroups of $\Gamma_n$}

In this section we study the finite index subgroups $H$ of $\Gamma_n$. First, in Theorem \ref{fisgamman} we find a specific set of generators for $H$ using a generalization of an argument given by Bogopolski in \cite{Bogopolski}. We use these generators to compute the index of $H$ in $\Gamma_n$. Then, in Theorem \ref{presforh}, we give a presentation for $H$ and, in Theorem \ref{meusigmah}, we compute $\Sigma^1(H)$. We end the section by exhibiting finite index subgroups $H$ of $\Gamma_n$ which are not isomorphic to $\Gamma_k$ for any $k \geq 2$.

\subsection{Generators, cosets and index}

The following useful lemma has an elementary proof and was used by Bogopolski in \cite{Bogopolski}.

\begin{lema}\label{itnlemma}
Let $n,s \geq 1$ be integers. Let $m$ be the biggest positive divisor of $s$ such that $\gcd(m,n)=1$. Then $s$ divides $mn^s$.
\end{lema}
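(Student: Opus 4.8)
The plan is to work with prime factorizations and $p$-adic valuations. First I would make the key structural observation that the maximal coprime divisor $m$ splits $s$ multiplicatively. Writing out the prime factorization of $s$, let $d$ be the product of all prime-power factors $p^{v_p(s)}$ of $s$ whose prime $p$ divides $n$, and let $m$ be the product of the remaining prime-power factors (those with $p \nmid n$). Then $s = md$, one has $\gcd(m,d)=1$, and every prime dividing $d$ divides $n$. I would then check that this $m$ is exactly the one in the hypothesis: $m$ is coprime to $n$ and divides $s$ by construction, and conversely any divisor of $s$ coprime to $n$ can contain no prime dividing $n$, hence divides $m$, so $m$ is the largest divisor of $s$ with $\gcd(m,n)=1$.

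Next I would reduce the target divisibility. Since $\gcd(m,d)=1$, the desired statement $s = md \mid mn^s$ is equivalent, after cancelling the common factor $m$, to $d \mid n^s$. So it suffices to prove $d \mid n^s$, and the role of $m$ is now finished.

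The core of the argument is then a short valuation check. For each prime $p \mid d$ I have $v_p(d) = v_p(s)$ by construction, while $p \mid n$ gives $v_p(n) \ge 1$ and hence $v_p(n^s) = s\,v_p(n) \ge s$. Since $p^{v_p(s)} \mid s$ forces $v_p(s) \le s$, I conclude $v_p(d) = v_p(s) \le s \le v_p(n^s)$ for every such $p$, and $v_p(d)=0$ for all other primes, which is precisely $d \mid n^s$.

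The argument is essentially bookkeeping, so I do not expect a deep obstacle; the only point requiring genuine care is the first paragraph, namely verifying that the factor $m$ produced by the prime decomposition really coincides with the ``biggest positive divisor of $s$ with $\gcd(m,n)=1$'' from the hypothesis, and that the coprimality $\gcd(m,d)=1$ legitimizes the cancellation in the reduction step. An alternative route avoids writing out valuations explicitly: once $s=md$ is in hand, one observes that $d \mid n^d$ (each prime power $p^{v_p(d)}$ divides $n^{v_p(d)}$ because $p \mid n$ and $v_p(d) \le d$), and since $d \le s$ one has $n^d \mid n^s$, yielding $d \mid n^s$ at once.
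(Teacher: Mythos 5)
Your proof is correct. The paper itself gives no argument for this lemma --- it is stated as having ``an elementary proof'' and attributed to Bogopolski --- so there is no in-paper proof to compare against; your valuation argument (splitting $s=md$ with $d$ supported on the primes of $n$, reducing to $d\mid n^s$, and checking $v_p(d)=v_p(s)\le s\le v_p(n^s)$) is a clean and complete way to supply the omitted details. One minor remark: the cancellation $md\mid mn^s\iff d\mid n^s$ needs only $m\ge 1$, not $\gcd(m,d)=1$, so that coprimality is not actually load-bearing in the reduction step.
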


To facilitate our computation, we aim to find a {\it good} set of generators of a finite index subgroup of $\Gamma_n$. To do so, we need the next two lemmas.

\begin{lema}[Replacing $j_0$ by any $j$]\label{replj0byjgen}
Suppose
\begin{equation}\label{replacej0eqn}
H= \langle {t_1}^{k_{11}}...{t_r}^{k_{1r}}a_{q_1}^{l_1},{t_2}^{k_{22}}...{t_r}^{k_{2r}}a_{q_2}^{l_2},...,{t_r}^{k_{rr}}a_{q_r}^{l_r},a_{j_0}^l \rangle \leq \Gamma_n
\end{equation}is a subgroup with arbitrary integers $k_{ii},l >0$, $k_{ij}\geq 0$ and $q_i,l_i,j_0 \in \Z$. Then, for any chosen $j \in \Z$, we can replace $a_{j_0}^l$ above by $a_j^l$, up to modifying $l>0$ by another positive integer (also called $l$), that is, $H= \langle {t_1}^{k_{11}}...{t_r}^{k_{1r}}a_{q_1}^{l_1},{t_2}^{k_{22}}...{t_r}^{k_{2r}}a_{q_2}^{l_2},...,{t_r}^{k_{rr}}a_{q_r}^{l_r},a_j^l \rangle$.
\end{lema}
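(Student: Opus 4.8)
The plan is to reduce the claimed equality of subgroups to a single reachability statement about which powers of the generators $a_j$ lie in the group. Denote the first $r$ generators in \eqref{replacej0eqn} by $g_1,\dots,g_r$, so that $g_i={t_i}^{k_{ii}}\cdots{t_r}^{k_{ir}}a_{q_i}^{l_i}$, and set $H_0=\langle g_1,\dots,g_r,a_{j_0}^l\rangle$. Since the target group $\langle g_1,\dots,g_r,a_j^l\rangle$ shares the generators $g_1,\dots,g_r$ with $H_0$ and differs only in the last generator, it suffices to prove that $a_j^l\in H_0$ for every $j\in\Z$: applying this with the roles of $a_{j_0}^l$ and $a_j^l$ interchanged then yields $a_{j_0}^l\in\langle g_1,\dots,g_r,a_j^l\rangle$, and the two inclusions give equality. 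In fact this would show the positive integer $l$ need not be altered at all, which is slightly stronger than the statement; I would record that and keep $l$ fixed throughout.

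To produce the elements $a_j^l$, I would use two index-shifting moves inside $H_0$, relying on the relations $a_{j+1}=a_j^n$ and $t^{\pm1}a_jt^{\mp1}=a_{j\pm1}$, where $t=t_1\cdots t_r$; these follow from Definition \ref{defgamman} together with the fact that the $t_i$ commute. First, $n$-th powers give $a_{j'+1}^l=(a_{j'}^l)^n$, so membership of $a_{j'}^l$ forces membership of $a_{j'+1}^l$, moving the index up by one. Second, if $w\in H_0$ has image $\varphi(w)=(-d,\dots,-d)=-d\mathbf{1}$ under the projection $\varphi\colon\Gamma_n\to\Z^r$ of \eqref{splitexact}, then, because the $a$-part of $w$ lies in the abelian group $A=\ker\varphi=\Z[1/n]$ and hence commutes with $a_{j'}^l$, conjugation by $w$ acts exactly as $t^{-d}(\cdot)\,t^{d}$, so $w\,a_{j'}^l\,w^{-1}=a_{j'-d}^l\in H_0$, moving the index down by $d$.

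The step I expect to be the crux is producing such a $w$, i.e.\ exhibiting an element of $H_0$ that projects to a negative multiple of $\mathbf{1}$. For this I would note that $\varphi(a_{j_0}^l)=0$ while $\varphi(g_i)$ is the $i$-th row $(0,\dots,0,k_{ii},k_{i,i+1},\dots,k_{ir})$ of the matrix $K=(k_{ij})$, so $\varphi(H_0)$ is the sublattice $L\le\Z^r$ generated by the rows of $K$. Since $K$ is upper triangular with strictly positive diagonal entries $k_{ii}>0$, it is nonsingular with $\det K=\prod_i k_{ii}=:d>0$, whence $L$ has finite index $d$ in $\Z^r$ and therefore $d\,\Z^r\subseteq L$. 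In particular $-d\mathbf{1}\in L=\varphi(H_0)$, which furnishes the required $w$. This is precisely where the hypothesis $k_{ii}>0$ is used.

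Finally I would assemble the two moves: starting from $a_{j_0}^l\in H_0$, the ``up by one'' and ``down by $d$'' operations reach every index, since for any target $j$ I can apply the downward move enough times to get below $j$ and then step up one at a time. Hence $a_j^l\in H_0$ for all $j\in\Z$, and the reduction of the first paragraph completes the proof. Apart from this finite-index observation, everything else is routine bookkeeping with the semidirect product structure of $\Gamma_n$.
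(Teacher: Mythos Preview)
Your argument is correct and in fact proves slightly more than the lemma asks: you keep the exponent $l$ unchanged, whereas the paper allows it to grow. The approaches differ in how the index is shifted. The paper conjugates $a_{j_0}^l$ by \emph{positive} powers of the $g_i$, which only multiplies the exponent by products of the prime powers $p_i^{y_i}$; it then arranges these multipliers so that a factor $n^{k}$ (with $k=k_{11}\cdots k_{rr}$) appears, converts $a_{j_0}^{n^{k}l\lambda}$ to $a_{j_0+k}^{l\lambda}$, and iterates to push the index up past $j$, handling $j\le j_0$ separately via $a_{j_0}=a_j^{\,n^{j_0-j}}$. You instead observe that the image lattice $\varphi(H_0)$ has index $d=\prod_i k_{ii}$ in $\Z^r$, hence contains $-d\mathbf{1}$, which lets you conjugate by an element with $t$-part $t^{-d}$ and move the index \emph{down} by $d$ without touching $l$; combined with the trivial ``up by one'' move $a_{j'+1}^l=(a_{j'}^l)^n$ this reaches every index. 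Your lattice/finite-index step is the genuinely new idea; it replaces the paper's explicit prime-power bookkeeping by a one-line structural argument and yields the cleaner conclusion that $l$ need not be modified. The paper's route, by contrast, is entirely constructive and avoids invoking $d\Z^r\subseteq\varphi(H_0)$, at the cost of tracking the changing exponent.
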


\begin{proof}
If $j \leq j_0$ we know from the presentation of $\Z\left[\frac{1}{n}\right]$ that $a_{j_0}$ is a positive power of $a_j$, so $a_{j_0}^l$ is also a positive power of $a_j$ and the lemma is obviously true. Let us treat the case $j>j_0$. Using that $\Z\left[\frac{1}{n}\right]$ is abelian and the relations of $\Gamma_n$, we can show that
\[
({t_i}^{k_{ii}}...{t_r}^{k_{ir}}a_{q_i}^{l_i})^{m_i}a_{j_0}^l({t_i}^{k_{ii}}...{t_r}^{k_{ir}}a_{q_i}^{l_i})^{-m_i}=a_{j_0}^{l{p_i}^{m_iy_ik_{ii}}...{p_r}^{m_iy_rk_{ir}}}
\]for every $i$ and every integer $m_i>0$. Thus we can replace $a_{j_0}^l$ in the expression of $H$ by this element $a_{j_0}^{l{p_i}^{m_iy_ik_{ii}}...{p_r}^{m_iy_rk_{ir}}}$, that is, we can multiply the power $l$ of $a_{j_0}$ by ${p_i}^{m_iy_ik_{ii}}...{p_r}^{m_iy_rk_{ir}}$ in \eqref{replacej0eqn}, and since this new power is still positive we can repeat the process recursively. By doing this for $i=1,...,r$ we can replace the power $l$ of $a_{j_0}$ in \eqref{replacej0eqn} by any number of the form
\[
l({p_1}^{m_1y_1k_{11}}...{p_r}^{m_1y_rk_{1r}})({p_2}^{m_2y_2k_{22}}...{p_r}^{m_2y_2k_{2r}})...({p_r}^{m_ry_rk_{rr}})
\]for any $m_1,...,m_r >0$. By putting together the first primes in the parentheses we rewrite this as
\[
{p_1}^{m_1y_1k_{11}}{p_2}^{m_2y_2k_{22}}...{p_r}^{m_ry_rk_{rr}}l\lambda
\]for some integer $\lambda>0$ depending on the $m_i$. In particular, for the integers $m_i=k_{11}...\widehat{k_{ii}}...k_{rr}$ we can replace the power $l$ of $a_{j_0}$ by
\[
{p_1}^{y_1k}{p_2}^{y_2k}...{p_r}^{y_rk} l\lambda =n^k l \lambda,
\]where $k=k_{11}...k_{rr}$. But $a_{j_0}^{n^kl\lambda}=a_{j_0+k}^{l\lambda}$, which is a positive power of $a_{j_0+1}$. We repeat this process a finite number of times until we reach the index $j>j_0$ we wanted and the lemma is proved.
\end{proof}

\begin{lema}[Replacing $l$ by $m$]\label{repllbymgen}
Let
\begin{equation}\label{repllbymeqngen}
H= \langle {t_1}^{k_{11}}...{t_r}^{k_{1r}}a_{q_1}^{l_1},{t_2}^{k_{22}}...{t_r}^{k_{2r}}a_{q_2}^{l_2},...,{t_r}^{k_{rr}}a_{q_r}^{l_r},a_j^l \rangle \leq \Gamma_n
\end{equation}be a subgroup with arbitrary integers $k_{ii},l >0$, $k_{ij}\geq 0$ and $q_i,l_i,j \in \Z$. Let $m$ be the biggest divisor of $l$ such that $\gcd(m,n)=1$. Then we can replace $a_j^l$ by $a_j^m$ in the expression above, that is, $H= \langle {t_1}^{k_{11}}...{t_r}^{k_{1r}}a_{q_1}^{l_1},{t_2}^{k_{22}}...{t_r}^{k_{2r}}a_{q_2}^{l_2},...,{t_r}^{k_{rr}}a_{q_r}^{l_r},a_j^m \rangle$.
\end{lema}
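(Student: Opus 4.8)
The plan is to work inside the normal abelian subgroup $N:=\Z\left[\frac1n\right]=\ker\varphi$ and to exploit that conjugation by the listed generators acts on $N$ by multiplication by units of the ring $\Z\left[\frac1n\right]$. Write $g_i:=t_i^{k_{ii}}\cdots t_r^{k_{ir}}a_{q_i}^{l_i}$ for $i=1,\dots,r$, so that $H=\langle g_1,\dots,g_r,a_j^l\rangle$. The easy inclusion is immediate: since $m\mid l$ we have $a_j^l=(a_j^m)^{l/m}$, whence $H\subseteq\langle g_1,\dots,g_r,a_j^m\rangle$. For the reverse inclusion it suffices to show $a_j^m\in H$. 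I would identify $N$ additively with the subring $\Z\left[\frac1n\right]\subset\Q$ via $a_j\leftrightarrow n^j$, so that $a_j^c\leftrightarrow c\,n^j$ and conjugation by $t_i$ becomes multiplication by the unit $p_i^{y_i}$. Put $V:=H\cap N$, a subgroup of $(N,+)$ containing $v_0:=[a_j^l]=l\,n^j$; the goal becomes $[a_j^m]=m\,n^j\in V$.

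The central step is to show $V$ is a module over all of $\Z\left[\frac1n\right]$. Since $g_i\in H$ and $N\trianglelefteq\Gamma_n$, conjugation by $g_i^{\pm1}$ carries $V$ into $V$; and because $N$ is abelian the factor $a_{q_i}^{l_i}$ acts trivially, so $g_i$ acts on $N$ as multiplication by the unit $u_i:=\prod_{s=i}^{r}p_s^{y_s k_{is}}$. Thus $V$ is stable under multiplication by each $u_i^{\pm1}$, hence is a module over the subring $R:=\Z[u_1^{\pm1},\dots,u_r^{\pm1}]\subseteq\Z\left[\frac1n\right]$. I would then prove $R=\Z\left[\frac1n\right]$ by a downward induction on $s=r,r-1,\dots,1$ showing each prime $p_s$ is invertible in $R$: first $u_r=p_r^{y_r k_{rr}}$ is a unit of $R$, so $p_r$ is a unit; and once $p_r,\dots,p_{s+1}$ are units, the identity $u_s=p_s^{y_s k_{ss}}\prod_{t>s}p_t^{y_t k_{st}}$ exhibits $p_s^{y_s k_{ss}}$ as a unit, hence $p_s$ is a unit. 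Here the standing hypothesis $k_{ss}>0$ for every $s$ is essential. Therefore $R=\Z[1/p_1,\dots,1/p_r]=\Z\left[\frac1n\right]$ and $V$ is a $\Z\left[\frac1n\right]$-submodule.

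To finish, write $l=m\,d$. By maximality of $m$ every prime divisor of $d$ lies among $p_1,\dots,p_r$ (otherwise $m$ could be enlarged while staying coprime to $n$), so $d$ is a unit of $\Z\left[\frac1n\right]$ and $d^{-1}\in\Z\left[\frac1n\right]$. Since $V$ is a $\Z\left[\frac1n\right]$-module and $v_0=m\,d\,n^j\in V$, we obtain $[a_j^m]=m\,n^j=d^{-1}v_0\in V\subseteq H$, i.e.\ $a_j^m\in H$, giving the reverse inclusion and the lemma.

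The only real content, and the main obstacle, is the key step: proving that conjugation by the given generators $g_i$ (each involving only $t_i,\dots,t_r$) already makes every prime dividing $n$ invertible in the coefficient ring, so that the $n$-smooth cofactor $d$ can be divided out \emph{exactly}. The triangular shape of the $g_i$ together with $k_{ii}>0$ is precisely what powers the downward induction; without $k_{ii}>0$ some $p_s$ might fail to be invertible and the exponent reduction would stall. I note that one can instead argue more in the elementary style of Lemma \ref{replj0byjgen}, using Lemma \ref{itnlemma} (the divisibility $l\mid m\,n^l$) to relate $a_j^m$ to index-shifted conjugates of $a_j^l$; but the bookkeeping there secretly reconstructs the same module computation, since purely multiplicative manipulations tend to over-divide and must be corrected by taking $\Z$-linear combinations, which is exactly the additive (module) structure used above.
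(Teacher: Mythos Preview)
Your argument is correct. The identification of $V=H\cap N$ as a module over $R=\Z[u_1^{\pm1},\dots,u_r^{\pm1}]$ is legitimate because $V$ is already an additive subgroup and conjugation by each $g_i^{\pm1}$ preserves both $H$ and the normal subgroup $N$; the downward induction showing $R=\Z[1/n]$ is exactly right (using $k_{ss}>0$ to get $p_s^{-1}=p_s^{y_sk_{ss}-1}\cdot p_s^{-y_sk_{ss}}\in R$), and the observation that $d=l/m$ is $n$-smooth, hence a unit, finishes cleanly.

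The paper takes a different, purely constructive route. It invokes Lemma~\ref{itnlemma} to get $l\mid mn^{lk_{rr}}$, writes down an explicit integer
\[
\gamma=\frac{mn^{lk_{rr}}\,p_1^{y_1(k_{11}-1)lk_{rr}}\cdots p_{r-1}^{y_{r-1}(k_{r-1,r-1}-1)lk_{rr}}\prod_{j=1}^{r-1}\prod_{i=j+1}^{r}p_i^{y_ik_{ji}k_{rr}l}}{l},
\]
and then verifies the single identity $A_1^{-lk_{rr}}\cdots A_{r-1}^{-lk_{rr}}A_r^{-l}(a_j^l)^{\gamma}A_r^{l}A_{r-1}^{lk_{rr}}\cdots A_1^{lk_{rr}}=a_j^m$. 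So the paper exhibits one concrete word in the generators witnessing $a_j^m\in H$, whereas you prove the stronger structural fact that $H\cap N$ is a full $\Z[1/n]$-submodule. Your approach is cleaner, immediately subsumes Lemma~\ref{replj0byjgen} as well (index shifts are multiplication by the unit $n$), and makes the role of the triangular hypothesis $k_{ss}>0$ transparent; the paper's approach has the advantage of being completely explicit and free of ring-theoretic language. Your closing remark correctly anticipates the paper's style.
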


\begin{proof}
It suffices to show that the inclusions $a_j^l \in \langle {t_1}^{k_{11}}...{t_r}^{k_{1r}}a_{q_1}^{l_1},{t_2}^{k_{22}}...{t_r}^{k_{2r}}a_{q_2}^{l_2},...,{t_r}^{k_{rr}}a_{q_r}^{l_r},a_j^m \rangle$ and $a_j^m \in H$ hold. The first inclusion is straightforward, because $l$ is a multiple of $m$ and so $a_j^l$ is a power of $a_j^m$. For the second inclusion first observe that by Lemma \ref{itnlemma}, $l$ must divide $mn^l$ and so it must also divide $mn^{lk_{rr}}$ . This implies that the number
\[
\gamma=\frac{mn^{lk_{rr}}{p_1}^{y_1(k_{11}-1)lk_{rr}}...{p_{r-1}}^{y_{r-1}(k_{r-1,r-1}-1)lk_{rr}}\prod_{j=1}^{r-1}\prod_{i=j+1}^{r}{p_i}^{y_ik_{ji}k_{rr}l}}{l}
\] is an integer. Let $A_1,...,A_r$ be the first $r$ generators of $H$ in (\ref{repllbymeqngen}), that is, $H=\langle A_1,...,A_r,a_j^l \rangle$. It is straightforward to show that
\[
{A_1}^{-lk_{rr}}...{A_{r-1}}^{-lk_{rr}}{A_r}^{-l}(a_j^l)^\gamma{A_r}^l{A_{r-1}}^{lk_{rr}}...{A_1}^{lk_{rr}}=a_j^m,
\]then $a_j^m \in H$, as desired.
\end{proof}

\begin{theorem}\label{fisgamman}
For any $\Gamma_n$, the following properties hold.
\begin{itemize}
\item[$1)$] Every finite index subgroup $H$ of $\Gamma_n$ can be written as
\[
H=\langle {t_1}^{k_{11}}...{t_r}^{k_{1r}}a^{l_1},{t_2}^{k_{22}}...{t_r}^{k_{2r}}a^{l_2},...,{t_r}^{k_{rr}}a^{l_r},a^m\rangle\ \ \ \ \ \ (*)
\]for $0 \leq k_{1i},...,k_{i-1,i}<k_{ii}$, $l_i \in \Z$ and $m>0$ an integer such that $\gcd(m,n)=1$ and $H \cap \left< a \right>= \left< a^m \right>$.

\item[$2)$]If $H$ is any subgroup of $\Gamma_n$ given by the expression $(*)$ for $0 \leq k_{1i},...,k_{i-1,i}<k_{ii}$, $l_i \in \Z$ and $m>0$ such that $\gcd(m,n)=1$ and $H \cap \left< a \right>= \left< a^m \right>$, then $T=\{{t_1}^{\beta_1}...{t_r}^{\beta_r}a^j\ |\ 0 \leq \beta_i < k_{ii},\ 0 \leq j <m\}$ is a transversal of $H$ in $\Gamma_n$. In particular, the index of $H$ in $\Gamma_n$ is $k_{11}...k_{rr}m$ and $H$ has finite index in $\Gamma_n$.
\end{itemize}
\end{theorem}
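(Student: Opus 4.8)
The plan is to study $H$ through the split exact sequence~\eqref{splitexact}, handling the quotient $\mathbb{Z}^r$ (the $t$-directions) and the kernel $\mathbb{Z}\left[\frac1n\right]$ (the $a$-direction) separately and then recombining. For part $1)$, since $[\Gamma_n:H]<\infty$ the image $\varphi(H)$ has finite index in $\mathbb{Z}^r$; putting a basis of $\varphi(H)$ into upper-triangular Hermite normal form yields vectors $v_i=k_{ii}e_i+\sum_{j>i}k_{ij}e_j$ with $k_{ii}>0$ and off-diagonal entries reduced to $0\le k_{1i},\dots,k_{i-1,i}<k_{ii}$. Lifting each $v_i$ to $H$ gives generators $A_i=t_i^{k_{ii}}\cdots t_r^{k_{ir}}u_i$ with $u_i\in\mathbb{Z}\left[\frac1n\right]$, and since every element of $\mathbb{Z}\left[\frac1n\right]$ is a power of some $a_q$ I may write $u_i=a_{q_i}^{l_i}$. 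For the kernel I would use that $H\cap\mathbb{Z}\left[\frac1n\right]$ has finite index in $\mathbb{Z}\left[\frac1n\right]$, and that since $\mathbb{Z}\left[\frac1n\right]$ is a rank-one torsion-free group on which multiplication by $n$ is invertible, every finite-index subgroup is of the form $m\mathbb{Z}\left[\frac1n\right]$ for a unique $m>0$ with $\gcd(m,n)=1$. Intersecting with $\langle a\rangle=\langle a_0\rangle$ and using $\gcd(m,n)=1$ then gives $H\cap\langle a\rangle=\langle a^m\rangle$, which already secures the two arithmetic claims in $(*)$.

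It remains to show that $a^m$ together with $A_1,\dots,A_r$ generates all of $H$, and that the tails $a_{q_i}^{l_i}$ may be replaced by honest powers $a^{l_i}$ of $a=a_0$. Since $H=\langle A_1,\dots,A_r\rangle\cdot\left(H\cap\mathbb{Z}\left[\frac1n\right]\right)$, it suffices to see that the $A_i$-conjugates of $a^m$ sweep out $m\mathbb{Z}\left[\frac1n\right]$; concretely one must reach $a_j^m$ for all $j$, including the negative indices that require ``dividing'' by the primes of $n$. This is exactly what Lemma~\ref{replj0byjgen} provides, by sliding the index of the kernel generator to any $j$, while Lemma~\ref{repllbymgen} normalizes its exponent to the $n$-free part $m$; equivalently, a direct computation shows the $\mathbb{Z}[\varphi(H)]$-module generated by $a^m$ is all of $m\mathbb{Z}\left[\frac1n\right]$. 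Finally, the tail of $A_i$ is only well defined modulo $H\cap\mathbb{Z}\left[\frac1n\right]=m\mathbb{Z}\left[\frac1n\right]$, and the quotient $\mathbb{Z}\left[\frac1n\right]/m\mathbb{Z}\left[\frac1n\right]\cong\mathbb{Z}/m$ is cyclic generated by the class of $a_0$, so each $a_{q_i}^{l_i}$ is congruent to some $a^{l_i}$; multiplying $A_i$ by a suitable element of $m\mathbb{Z}\left[\frac1n\right]$ does not change $H$ and installs the desired form. I expect this step --- recovering the infinitely generated group $H\cap\mathbb{Z}\left[\frac1n\right]$ from a single power of $a$ --- to be the main obstacle, and it is precisely what the two preparatory lemmas are built to overcome.

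For part $2)$ I would check directly that $T$ is a left transversal. Covering: writing an arbitrary $w=t_1^{\alpha_1}\cdots t_r^{\alpha_r}u$, the triangular generators let me reduce $(\alpha_1,\dots,\alpha_r)$ modulo $\varphi(H)$ into the box $\{0\le\beta_i<k_{ii}\}$ --- reduce $\alpha_1$ mod $k_{11}$ using $v_1$, then $\alpha_2$ mod $k_{22}$ using $v_2$ (which no longer disturbs the first coordinate), and so on down the triangle --- after which $w$ lies in $t_1^{\beta_1}\cdots t_r^{\beta_r}\cdot\mathbb{Z}\left[\frac1n\right]\cdot H$; reducing the leftover $\mathbb{Z}\left[\frac1n\right]$-factor modulo $m\mathbb{Z}\left[\frac1n\right]$ and using that $\mathbb{Z}\left[\frac1n\right]/m\mathbb{Z}\left[\frac1n\right]\cong\mathbb{Z}/m$ is generated by the class of $a$ absorbs it into an $a^j$ with $0\le j<m$, so $w\in T\cdot H$. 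Disjointness: if $s,s'\in T$ lie in one coset then $s^{-1}s'\in H$, so $\varphi(s^{-1}s')\in\varphi(H)$ forces $\beta_i=\beta_i'$ by uniqueness of the box representative, and then $a^{j-j'}\in H\cap\langle a\rangle=\langle a^m\rangle$ forces $j=j'$. Counting $|T|=k_{11}\cdots k_{rr}\,m$ then gives the index and the finiteness. The only real care here is bookkeeping inside the semidirect product, for which the ``well-behaved $t$-coordinates'' noted after~\eqref{splitexact} ensure the reduction of the $t$-exponents is unaffected by the $\mathbb{Z}\left[\frac1n\right]$-tails.
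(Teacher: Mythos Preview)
Your proposal is correct and follows the same overall architecture as the paper's proof --- split exact sequence, upper-triangular (Hermite) form on the $\mathbb{Z}^r$ quotient, and Lemmas~\ref{replj0byjgen}--\ref{repllbymgen} to control the $\mathbb{Z}\left[\frac1n\right]$ part --- but your organization is more structural in two places. First, you invoke at the outset the fact that every finite-index subgroup of $\mathbb{Z}\left[\frac1n\right]$ has the form $m\mathbb{Z}\left[\frac1n\right]$ with $\gcd(m,n)=1$; the paper instead builds up to $H\cap\langle a\rangle=\langle a^m\rangle$ by hand, which forces it to treat separately the degenerate case where the kernel generator has exponent $l=0$ (handled there via commutators and Theorem~\ref{meusigmagamman}). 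Your route bypasses that case distinction entirely. Second, to replace the tails $a_{q_i}^{l_i}$ by powers of $a$, you pass to the quotient $\mathbb{Z}\left[\frac1n\right]/m\mathbb{Z}\left[\frac1n\right]\cong\mathbb{Z}/m$, whereas the paper carries out an explicit B\'ezout computation; these are equivalent, but yours is cleaner. For part~$2)$ the arguments coincide (the paper's hand computation that the $\lambda_i$ vanish is exactly your ``uniqueness of the box representative'' for Hermite normal form), with the cosmetic difference that you produce a left transversal and the paper a right one --- irrelevant for the index. The one phrase to tighten is ``the $A_i$-conjugates of $a^m$ sweep out $m\mathbb{Z}\left[\frac1n\right]$'': conjugation alone gives only positive-index $a_j^m$, and reaching negative indices genuinely requires the division trick of Lemma~\ref{repllbymgen}, as you correctly note immediately after.
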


\begin{proof}
$1)$ First, since $\Gamma_n$ is finitely generated and $H$ is finite index, by the Reidemeister-Schreier theorem $H$ must be also finitely generated and we write
\[
H=\langle {t_1}^{\alpha_{11}}...{t_r}^{\alpha_{1r}}v_1,..., {t_1}^{\alpha_{m1}}...{t_r}^{\alpha_{mr}}v_m \rangle
\]for $\alpha_{ij} \in \Z$ and $v_i \in \Z\left[\frac{1}{n}\right]$. Note that $m \geq r$. Otherwise, $\varphi(H)$ would be a subgroup of $\Z^r$ with rank $<r$ and then would have infinite index, a contradiction because $\varphi$ is surjective. With a similar projection argument, we see that there must be at least one $i$ such that $\alpha_{i1} \neq 0$. Let $\displaystyle{k_{11}=\gcd_{\alpha_{i1} \neq 0}\{\alpha_{i1}\}}$. Since $k_{11}>0$ is the smallest positive integer combination of the $\alpha_{i1} \neq 0$, we can obtain inside $H$ an element of the form ${t_1}^{k_{11}}...{t_r}^{k_{1r}}u_1$ for some $k_{12},...,k_{1r} \in \Z$ and $u_1 \in \Z\left[\frac{1}{n}\right]$, so we can write
\begin{equation}\label{add1}
H=\langle {t_1}^{\alpha_{11}}...{t_r}^{\alpha_{1r}}v_1,..., {t_1}^{\alpha_{m1}}...{t_r}^{\alpha_{mr}}v_m, {t_1}^{k_{11}}...{t_r}^{k_{1r}}u_1 \rangle.
\end{equation} Now, since all the nonzero $\alpha_{i1}$ are multiples of $k_{11}$, say, $\alpha_{i1}=d_ik_{11}$, we can replace ${t_1}^{\alpha_{i1}}...{t_r}^{\alpha_{ir}}v_i$ by $({t_1}^{\alpha_{i1}}...{t_r}^{\alpha_{ir}}v_i)({t_1}^{k_{11}}...{t_r}^{k_{1r}}u_1)^{-d_i}={t_2}^{\alpha_{i2}'}...{t_r}^{\alpha_{ir}'}v_i'$ in (\ref{add1}). Then, after relabeling these new generators, we can write
\[
H=\langle {t_2}^{\alpha_{12}}...{t_r}^{\alpha_{1r}}v_1,..., {t_2}^{\alpha_{m2}}...{t_r}^{\alpha_{mr}}v_m, {t_1}^{k_{11}}...{t_r}^{k_{1r}}u_1 \rangle.
\] We added a new generator and ``eliminated'' all the $t_1$ coordinates of the first $m$ generators of $H$. This was the first step. In a similar way, we can do this for all the other $t_2,...,t_r$ coordinates. After $r$ steps, we added $r$ new generators and eliminated all the $t_1,...,t_r$ letters from the first $m$ generators from $H$, so we have
\[
H= \langle v_1,...,v_m, {t_1}^{k_{11}}...{t_r}^{k_{1r}}u_1,{t_2}^{k_{22}}...{t_r}^{k_{2r}}u_2,...,{t_r}^{k_{rr}}u_r\rangle
\]with $k_{ii}>0$ and $v_i,u_i \in \Z\left[\frac{1}{n}\right]$. But in $\Z\left[\frac{1}{n}\right]$ we have $\langle v_1,...,v_m \rangle=\langle u \rangle$ for some $u \in \Z\left[\frac{1}{n}\right]$ and
\begin{equation}\label{addr}
H= \langle {t_1}^{k_{11}}...{t_r}^{k_{1r}}u_1,{t_2}^{k_{22}}...{t_r}^{k_{2r}}u_2,...,{t_r}^{k_{rr}}u_r,u \rangle
\end{equation}

By manipulating the generators above if necessary, we may suppose that $0 \leq k_{1i},...,k_{i-1,i}<k_{ii}$ (they could be also positive if we wanted) in (\ref{addr}). Finally, write $u_i=a_{q_i}^{l_i}, u=a_q^l$ for $q_i,q,l_i,l \in \Z$. Then
\begin{equation}\label{halfproof}
H= \langle {t_1}^{k_{11}}...{t_r}^{k_{1r}}a_{q_1}^{l_1},{t_2}^{k_{22}}...{t_r}^{k_{2r}}a_{q_2}^{l_2},...,{t_r}^{k_{rr}}a_{q_r}^{l_r},a_q^l \rangle.
\end{equation}

Let us show that we may assume $l>0$ above. If $l \neq 0$ then, up to changing $a_q^l$ by $(a_q^l)^{-1}=a_q^{-1}$ if necessary, we are done. If $l=0$, that is,
\begin{equation}\label{nol}
H= \langle {t_1}^{k_{11}}...{t_r}^{k_{1r}}a_{q_1}^{l_1},{t_2}^{k_{22}}...{t_r}^{k_{2r}}a_{q_2}^{l_2},...,{t_r}^{k_{rr}}a_{q_r}^{l_r} \rangle,
\end{equation} we do the following: since $\Z^r$ is abelian, every commutator of elements in $H$ must be in $Ker(\varphi)$ (and obviously in $H$). At least one of the commutators between the $r$ generators of $H$ in (\ref{nol}) must be non-trivial. Otherwise, $H$ would be a finite index abelian subgroup of $\Gamma_n$ and we would have $\Sigma^1(\Gamma_n)=S(\Gamma_n)$ by using Proposition $B1.11$ in \cite{Strebel}, a contradiction to Theorem \ref{meusigmagamman}. Then let $a_j^{l'}$ ($l' \neq 0$) be a non-trivial commutator between two generators of $H$. We can add it to \ref{nol} and up to changing $a_j^{l'}$ by its inverse, we are done.

Our next steps will be eliminating the subindices $q_i$ from the $a$ letters in the generators of (\ref{halfproof}). Fix some $1 \leq i \leq r$. If $q_i \geq 0$, then $a_{q_i}^{l_i}$ is a power of $a$ and we are done by doing this replacement in (\ref{halfproof}). Suppose $q_i<0$. By Lemma \ref{replj0byjgen} we replace $q$ by $q_i$ in (\ref{halfproof}). Now, let $m$ be the biggest divisor of $l$ such that $\gcd(m,n)=1$. By Lemma \ref{repllbymgen} we can also replace $l$ by $m$ above and obtain
\[
H= \langle {t_1}^{k_{11}}...{t_r}^{k_{1r}}a_{q_1}^{l_1},{t_2}^{k_{22}}...{t_r}^{k_{2r}}a_{q_2}^{l_2},...,{t_r}^{k_{rr}}a_{q_r}^{l_r},a_{q_i}^m \rangle.
\] Since $\gcd(m,n)=1$ we also have $\gcd(m,n^{-q_i})=1$ and there must be $\tilde{\alpha},\tilde{\beta} \in \Z$ such that $\tilde{\alpha}m+\tilde{\beta}n^{-q_i}=1$. Then for $\alpha=l_i\tilde{\alpha}$ and $\beta=l_i\tilde{\beta}$ we have $\alpha m+\beta n^{-q_i}=l_i$, or
\[
l_i-m\alpha= n^{-q_i}\beta.
\] Then, using the relations in $\Gamma_n$ we have
\begin{eqnarray*}
H &=& \langle {t_1}^{k_{11}}...{t_r}^{k_{1r}}a_{q_1}^{l_1},{t_2}^{k_{22}}...{t_r}^{k_{2r}}a_{q_2}^{l_2},...,{t_i}^{k_{ii}}...{t_r}^{k_{ir}}a_{q_i}^{l_i},...,{t_r}^{k_{rr}}a_{q_r}^{l_r},a_{q_i}^m \rangle \\
&=& \langle {t_1}^{k_{11}}...{t_r}^{k_{1r}}a_{q_1}^{l_1},{t_2}^{k_{22}}...{t_r}^{k_{2r}}a_{q_2}^{l_2},...,{t_i}^{k_{ii}}...{t_r}^{k_{ir}}a_{q_i}^{l_i-m\alpha},...,{t_r}^{k_{rr}}a_{q_r}^{l_r},a_{q_i}^m \rangle \\
&=& \langle {t_1}^{k_{11}}...{t_r}^{k_{1r}}a_{q_1}^{l_1},{t_2}^{k_{22}}...{t_r}^{k_{2r}}a_{q_2}^{l_2},...,{t_i}^{k_{ii}}...{t_r}^{k_{ir}}a_{q_i}^{n^{-q_i}\beta},...,{t_r}^{k_{rr}}a_{q_r}^{l_r},a_{q_i}^m \rangle \\
&=& \langle {t_1}^{k_{11}}...{t_r}^{k_{1r}}a_{q_1}^{l_1},{t_2}^{k_{22}}...{t_r}^{k_{2r}}a_{q_2}^{l_2},...,{t_i}^{k_{ii}}...{t_r}^{k_{ir}}a^\beta,...,{t_r}^{k_{rr}}a_{q_r}^{l_r},a_{q_i}^m \rangle \\
\end{eqnarray*} and relabeling $\beta$ by $l_i$, $m$ by $l$ and $q_i$ by $q$ again we have
\[
H=\langle {t_1}^{k_{11}}...{t_r}^{k_{1r}}a_{q_1}^{l_1},{t_2}^{k_{22}}...{t_r}^{k_{2r}}a_{q_2}^{l_2},...,{t_i}^{k_{ii}}...{t_r}^{k_{ir}}a^{l_i},...,{t_r}^{k_{rr}}a_{q_r}^{l_r},a_q^l \rangle,
\]that is, we removed the subindex $q_i$ from $a_{q_i}^{l_i}$ in \ref{halfproof}. If we do this for all $i$ we remove all the subindices and obtain
\[
H=\langle {t_1}^{k_{11}}...{t_r}^{k_{1r}}a^{l_1},{t_2}^{k_{22}}...{t_r}^{k_{2r}}a^{l_2},...,{t_r}^{k_{rr}}a^{l_r},a_q^l\rangle
\] for some $q \in \Z$. We can use Lemma \ref{replj0byjgen} to replace $q$ by $0$ and we get the desired set of generators for $H$. To finish, let $m$ (a new one) be the biggest divisor of $l$ such that $\gcd(m,n)=1$. By Lemma \ref{repllbymgen}, we replace $a^l$ by $a^m$ in the expression above. If $H \cap \left< a \right>= \left< a^m \right>$, we are done. If not, let $m'=\min\{k \geq 1\ |\ a^k \in H\}$. It's easy to see that $H \cap \left< a \right>= \left< a^{m'} \right>$. Since $a^m \in H$, $m$ is a multiple of $m'$ and we have $\gcd(m',n)=1$. Then, by adding $a^{m'}$ to the set of generators of $H$, the generator $a^m$ can be removed. By relabeling $m'$ by $m$, we obtain the desired result.

\bigskip

$2)$ Let $H$ be such a subgroup. As shown in item $1)$, we may suppose that $k_{ij} > 0$ for all $i,j$. Let us first show that $\Gamma_n=\bigcup_{{t_1}^{\beta_1}...{t_r}^{\beta_r}a^j \in T} H{t_1}^{\beta_1}...{t_r}^{\beta_r}a^j$.  Every element of $\Gamma_n$ is written as ${t_1}^{-\alpha_1}...{t_r}^{-\alpha_r}a^l{t_1}^{\gamma_1}...{t_r}^{\gamma_r}$ for $\alpha_i,\gamma_i \geq 0$ and $l \in \Z$. Since $k_{ij} > 0$ for all $i,j$, one can show that every coset of $\Gamma_n$ is of the form $Ha^l{t_1}^{\gamma_1}...{t_r}^{\gamma_r}$ for $l \in \Z$ and $\gamma_i \geq 0$. Now we claim that every such coset can be also written as $H{t_1}^{\gamma_1}...{t_r}^{\gamma_r}a^{l'}$ for some integer $l'$. In fact, because $1=\gcd(m,n)=\gcd(m,{p_1}^{y_1}...{p_r}^{y_r})$, the prime decomposition of $m$ does not involve any of the $p_i$. Then it is also true that $\gcd(m,{p_1}^{\gamma_1y_1}...{p_r}^{\gamma_ry_r})=1$. Let $k,k'$ be integers such that $km+k'{p_1}^{\gamma_1y_1}...{p_r}^{\gamma_ry_r}=1$. Then $l+(-lk)m=(lk'){p_1}^{\gamma_1y_1}...{p_r}^{\gamma_ry_r}$ and relabeling $-lk$ by $k$ and $lk'$ by $k'$ we get $l+km=k'{p_1}^{\gamma_1y_1}...{p_r}^{\gamma_ry_r}$. Now since $a^m \in H$ we do

\begin{eqnarray*}
Ha^l{t_1}^{\gamma_1}...{t_r}^{\gamma_r}	&=& H(a^m)^ka^l{t_1}^{\gamma_1}...{t_r}^{\gamma_r}\\
										&=& Ha^{l+km}{t_1}^{\gamma_1}...{t_r}^{\gamma_r}\\
										&=& Ha^{k'{p_1}^{\gamma_1y_1}...{p_r}^{\gamma_ry_r}}{t_1}^{\gamma_1}...{t_r}^{\gamma_r}\\
										&=& H{t_1}^{\gamma_1}...{t_r}^{\gamma_r}a^{k'}
\end{eqnarray*}and relabeling $k'$ by $l'$ we showed the claim. To transform this coset into one of the cosets in the theorem, we apply successive algorithms: choose some index $i$. If $\gamma_i < k_{ii}$ we stop the algorithm. If $\gamma_i \geq k_{ii}$, by manipulating this coset we show that
\[
H{t_1}^{\gamma_1}...{t_r}^{\gamma_r}a^l=H{t_1}^{\gamma_1}...{t_{i-1}}^{\gamma_{i-1}}{t_i}^{\gamma_i-k_{ii}}{t_{i+1}}^{\gamma_{i+1}'}...{t_r}^{\gamma_r'}a^{l'}
\]for some integer $l'$. If $\gamma_i-k_{ii} < k_{ii}$ we stop the algorithm. If $\gamma_i-k_{ii} \geq k_{ii}$ we do the above again. Then after finite steps our ``$i$-algorithm'' shows that
\[
H{t_1}^{\gamma_1}...{t_r}^{\gamma_r}a^l=H{t_1}^{\gamma_1}...{t_{i-1}}^{\gamma_{i-1}}{t_i}^{\beta_i}{t_{i+1}}^{\gamma_{i+1}'}...{t_r}^{\gamma_r'}a^{l'}
\] for some $0 \leq \beta_i <k_{ii}$. Now, starting with the coset $H{t_1}^{\gamma_1}...{t_r}^{\gamma_r}a^l$, we successively apply the ``$i$-algorithm'' for $i=1,2,...,r$ and obtain exactly
\[
H{t_1}^{\gamma_1}...{t_r}^{\gamma_r}a^l=H{t_1}^{\beta_1}...{t_r}^{\beta_r}a^{l'}
\] for $0 \leq \beta_i <k_{ii}$ and $l' \in \Z$. Finally, write $l'=qm+j$ for $0 \leq j <m$. Then $H{t_1}^{\beta_1}...{t_r}^{\beta_r}a^{l'}=H{t_1}^{\beta_1}...{t_r}^{\beta_r}a^j$ because
\begin{eqnarray*}
{t_1}^{\beta_1}...{t_r}^{\beta_r}a^{l'}({t_1}^{\beta_1}...{t_r}^{\beta_r}a^j)^{-1}&=&{t_1}^{\beta_1}...{t_r}^{\beta_r}a^{l'-j}{t_r}^{-\beta_r}...{t_1}^{-\beta_1}\\
&=& {t_1}^{\beta_1}...{t_r}^{\beta_r}a^{mq}{t_r}^{-\beta_r}...{t_1}^{-\beta_1}\\
&=& (a^m)^{qp_1^{\beta_1y_1}...{p_r}^{\beta_ry_r}} \in H.\\
\end{eqnarray*} This shows that $\Gamma_n=\bigcup_{{t_1}^{\beta_1}...{t_r}^{\beta_r}a^j \in T} H{t_1}^{\beta_1}...{t_r}^{\beta_r}a^j$.

Now let us show that the cosets over $T$ are all distinct. Let $H{t_1}^{\beta_1}...{t_r}^{\beta_r}a^j=H{t_1}^{\beta_1'}...{t_r}^{\beta_r'}a^{j'}$ for $0 \leq \beta_i, \beta_i' <k_{ii}$ and $0 \leq j,j' < m$. By definition,
\begin{eqnarray*}
w=a^{{p_1}^{y_1\beta_1}...{p_r}^{y_r\beta_r}(j-j')}{t_1}^{\beta_1-\beta_1'}...{t_r}^{\beta_r-\beta_r'}&=&{t_1}^{\beta_1}...{t_r}^{\beta_r}a^{j-j'}{t_1}^{-\beta_1}...{t_r}^{-\beta_r}{t_1}^{\beta_1-\beta_1'}...{t_r}^{\beta_r-\beta_r'}\\
&=&{t_1}^{\beta_1}...{t_r}^{\beta_r}a^j({t_1}^{\beta_1'}...{t_r}^{\beta_r'}a^{j'})^{-1} \in H.
\end{eqnarray*} Then, projecting in $\Z^r$,
\[
(\beta_1-\beta_1',...,\beta_r-\beta_r')=\varphi(w) \in \varphi(H)= \left< (k_{11},k_{12},...,k_{1r}),(0,k_{22},...,k_{2r}),...,(0,...,0,k_{rr}) \right>.
\]Write
\[
(\beta_1-\beta_1',...,\beta_r-\beta_r')= \lambda_1(k_{11},k_{12},...,k_{1r})+\lambda_2(0,k_{22},...,k_{2r})+...+\lambda_r(0,...,0,k_{rr})
\]for integers $\lambda_i$. Since the first vector $(k_{11},k_{12},...,k_{1r})$ is the only one with non-vanishing first coordinate we have $\beta_1-\beta_1'=\lambda_1k_{11}$. Since $0 \leq \beta_1, \beta_1' <k_{11}$ we must have $\beta_1=\beta_1'$ and therefore $\lambda_1=0$. By easy induction we can show that all the $\lambda_i$ must vanish. Now, we just have to show that $j=j'$. We already have $a^{{p_1}^{y_1\beta_1}...{p_r}^{y_r\beta_r}(j-j')} \in H$. Since $H \cap \left< a \right>=\left< a^m \right>$ (by item $1)$), we have
\[
{p_1}^{y_1\beta_1}...{p_r}^{y_r\beta_r}(j-j')=qm
\]for some $q \in \Z$. So $m$ divides ${p_1}^{y_1\beta_1}...{p_r}^{y_r\beta_r}(j-j')$. Since $\gcd(n,m)=1$, $m$ does not contain any of the $p_i$ in its prime decomposition, and therefore $m$ must divide $j-j'$. Since $0 \leq j,j'<m$ we have $j=j'$, as desired. This completes the proof.
\end{proof}

\subsection{A presentation}

We now give a presentation for an arbitrary finite index subgroup $H$ of $\Gamma_n$.

\begin{theorem}\label{presforh} Let $H$ be any finite index subgroup of $\Gamma_n$ (see Theorem \ref{fisgamman}), say,
\[
H=\langle {t_1}^{k_{11}}...{t_r}^{k_{1r}}a^{l_1},{t_2}^{k_{22}}...{t_r}^{k_{2r}}a^{l_2},...,{t_r}^{k_{rr}}a^{l_r},a^m\rangle\ \ \ \ \ \ (*)
\]for $k_{ii}>0$, $k_{ij}\geq 0$, $l_i \in \Z$ and $m>0$ an integer such that $\gcd(m,n)=1$ and $H \cap \left< a \right>= \left< a^m \right>$. Then $H$ has the following presentation:
\[
H \simeq \left<\alpha,x_1,...,x_r\ |\ x_i\alpha x_i^{-1}=\alpha^{P_i},\ x_ix_jx_i^{-1}x_j^{-1}=\alpha^{R_{ij}} \right>,
\]where $P_i=p_i^{y_ik_{ii}}...p_r^{y_rk_{ir}}$ ($i=1,...,r$) and $R_{ij} \in \Z$ characterized by
\[
l_iP_i(1-P_j)-l_jP_j(1-P_i)=R_{ij}m.
\]
\end{theorem}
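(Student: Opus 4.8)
The plan is to realize the claimed presentation as an isomorphism $\pi\colon G\to H$, where
\[
G=\left\langle \alpha,x_1,\dots,x_r\ \middle|\ x_i\alpha x_i^{-1}=\alpha^{P_i},\ x_ix_jx_i^{-1}x_j^{-1}=\alpha^{R_{ij}}\right\rangle
\]
is the presented group, by declaring $\pi(\alpha)=a^m$ and $\pi(x_i)=A_i:=t_i^{k_{ii}}\cdots t_r^{k_{ir}}a^{l_i}$. By Theorem \ref{fisgamman} these elements generate $H$, so once $\pi$ is known to be a well-defined homomorphism it is automatically surjective, and the problem splits into (i) checking that the two families of relators hold in $H$ (this is where the exponents $P_i$ and $R_{ij}$ come from) and (ii) proving injectivity.

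For (i) I would compute inside $\Gamma_n=\mathbb{Z}[1/n]\rtimes\mathbb{Z}^r$, recording an element as a pair $(u,\mathbf w)$ and using that conjugation by $t^{\mathbf w}$ multiplies the $\mathbb{Z}[1/n]$-coordinate by $N(\mathbf w)=\prod_s p_s^{y_s w_s}$; writing $\mathbf k_i=\varphi(A_i)$ one has $N(\mathbf k_i)=P_i$. The first relation is then immediate, since $a^{l_i}$ commutes with $a^m$: $A_i(a^m)A_i^{-1}=t^{\mathbf k_i}a^m t^{-\mathbf k_i}=a^{mP_i}=(a^m)^{P_i}$. For the commutator relation a direct computation of $A_iA_jA_i^{-1}A_j^{-1}$ in the semidirect product gives vanishing $\mathbb{Z}^r$-coordinate and $\mathbb{Z}[1/n]$-coordinate equal to $E:=l_iP_i(1-P_j)-l_jP_j(1-P_i)$, so $[A_i,A_j]=a^{E}$ with $E\in\mathbb{Z}$. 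Since $a^{E}\in H\cap\langle a\rangle=\langle a^m\rangle$, we get $m\mid E$, so $R_{ij}:=E/m$ is exactly the integer of the statement and $[A_i,A_j]=(a^m)^{R_{ij}}=\pi(\alpha)^{R_{ij}}$. Hence $\pi$ is a well-defined epimorphism.

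For (ii) I would compare two short exact sequences. Composing $\pi$ with $\bar\varphi=\varphi|_H\colon H\to L:=\varphi(H)\cong\mathbb{Z}^r$ gives $\psi\colon G\to L$ with $\psi(\alpha)=0$, $\psi(x_i)=\mathbf k_i$. Since the $\mathbf k_i$ form a basis of $L$ and, modulo $\langle\langle\alpha\rangle\rangle$, the presentation of $G$ collapses to that of $\mathbb{Z}^r$, one gets $\ker\psi=\langle\langle\alpha\rangle\rangle=:N$ and $G/N\xrightarrow{\sim}L$. The same normal-closure computation in $H$ (using that $L\cong\mathbb{Z}^r$ is Hopfian) shows $\pi(N)=\langle\langle a^m\rangle\rangle=H\cap\ker\varphi$. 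Thus $\pi$ fits into the ladder
\[
\begin{array}{ccccccccc}
1&\to&N&\to&G&\xrightarrow{\psi}&L&\to&1\\
 & &\downarrow{\pi_N}& &\downarrow{\pi}& &\parallel& &\\
1&\to&H\cap\ker\varphi&\to&H&\xrightarrow{\bar\varphi}&L&\to&1,
\end{array}
\]
so by the five lemma $\pi$ is an isomorphism as soon as $\pi_N$ is; as $\pi_N$ is already surjective, it remains to show it is injective.

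The injectivity of $\pi_N$ is the main obstacle, and it is precisely where the commutator relators are essential: since $H\cap\ker\varphi\le\mathbb{Z}[1/n]$ is abelian, injectivity forces $N$ to be abelian, so no shortcut avoids a genuine analysis of $N$. I would obtain a presentation of $N$ by Reidemeister--Schreier applied to $G$ with the Schreier transversal $\{x^{\mathbf e}=x_1^{e_1}\cdots x_r^{e_r}:\mathbf e\in\mathbb{Z}^r\}$. Its generators are the conjugates $\alpha_{\mathbf e}=x^{\mathbf e}\alpha x^{-\mathbf e}$; rewriting the relators $x_ix_jx_i^{-1}x_j^{-1}\alpha^{-R_{ij}}$ yields exactly the relations making all $\alpha_{\mathbf e}$ commute, while rewriting $x_i\alpha x_i^{-1}\alpha^{-P_i}$ yields $\alpha_{\mathbf e+\mathbf e_i}=\alpha_{\mathbf e}^{P_i}$ (each reordering of the $x$'s contributing only powers of $\alpha$, via the commutator relators), where $\mathbf e_i$ is the $i$-th standard basis vector. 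These are the defining relations of the additive group $\mathbb{Z}[1/n]=\langle a_j\mid a_j^{n}=a_{j+1}\rangle$, once one notes that the $P_i$ together involve every prime $p_1,\dots,p_r$ (because each $k_{ii}>0$), so the resulting locally cyclic group is $\mathbb{Z}[1/n]$ with $\alpha_{\mathbf e}\leftrightarrow P^{\mathbf e}:=\prod_i P_i^{e_i}$. Comparing with $\pi_N(\alpha_{\mathbf e})=a^{mP^{\mathbf e}}$ identifies $\pi_N$ with the isomorphism $\mathbb{Z}[1/n]\to m\mathbb{Z}[1/n]$, $u\mapsto mu$, which is injective. I expect the real work to lie in this last step: the Reidemeister--Schreier bookkeeping confirming that the rewritten relators impose nothing beyond commutativity and the module relations, i.e. that $N$ is no larger than $\mathbb{Z}[1/n]$.
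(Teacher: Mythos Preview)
Your approach is correct and sets up the same homomorphism $\pi\colon G\to H$, verifying the relations identically; the divergence is entirely in the proof of injectivity. You reduce via the five lemma to showing that $\pi$ restricts to an isomorphism $N=\langle\!\langle\alpha\rangle\!\rangle\to H\cap\ker\varphi$, and then invoke Reidemeister--Schreier to identify $N$ with $\mathbb{Z}[1/n]$. The paper takes a much shorter, elementary route: from the relations $x_i\alpha=\alpha^{P_i}x_i$, $x_ix_j=\alpha^{R_{ij}}x_jx_i$, $x_ix_j^{-1}=x_j^{-1}\alpha^{-R_{ij}}x_i$, $\alpha x_i^{-1}=x_i^{-1}\alpha^{P_i}$, one sees that every positive $x_i$-letter can be pushed to the right and every negative $x_i$-letter to the left, so any element of $G$ can be written as
\[
w \;=\; x_1^{-\lambda_1}\cdots x_r^{-\lambda_r}\,\alpha^{M}\, x_r^{\delta_r}\cdots x_1^{\delta_1}\qquad(\lambda_i,\delta_i\ge 0,\ M\in\Z).
\]
If $\pi(w)=1$, projecting onto the $t_1$-coordinate in $\Gamma_n$ forces $\lambda_1=\delta_1$; conjugating away the outer $x_1^{\pm\lambda_1}$ and iterating yields $\lambda_i=\delta_i$ for all $i$, and finally $a^{mM}=1$, whence $M=0$ and $w=1$. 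This normal-form argument is a few lines and never needs to analyse $N$ at all --- in particular it bypasses the Reidemeister--Schreier bookkeeping you rightly flag as the real work (where the Schreier generators $x^{\mathbf e}x_i\,(x^{\mathbf e+\mathbf e_i})^{-1}$ are nontrivial words in the $\alpha_{\mathbf f}$ that must be eliminated before one can read off commutativity and the module relations). Your route buys a cleaner structural picture of $H$ as an extension $\mathbb{Z}[1/n]\text{-by-}\mathbb{Z}^r$; the paper's buys brevity.
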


\begin{proof}
It is easy to see that $(t_i^{k_{ii}}...t_r^{k_{ir}}a^{l_i})a^m(t_i^{k_{ii}}...t_r^{k_{ir}}a^{l_i})^{-1}=a^{mP_i}$ in $\Gamma_n$, for $i=1,...,r$. Also, since
\[
(t_i^{k_{ii}}...t_r^{k_{ir}}a^{l_i})(t_j^{k_{jj}}...t_r^{k_{jr}}a^{l_j})(t_i^{k_{ii}}...t_r^{k_{ir}}a^{l_i})^{-1}(t_j^{k_{jj}}...t_r^{k_{jr}}a^{l_j})^{-1}=a^{l_iP_i(1-P_j)-l_jP_j(1-P_i)} \in H \cap \left< a \right>=\left< a^m \right>,
\]we have $l_iP_i(1-P_j)-l_jP_j(1-P_i)=R_{ij}m$ for some integer $R_{ij}$.

We write 
$(t_i^{k_{ii}}...t_r^{k_{ir}}a^{l_i})(t_j^{k_{jj}}...t_r^{k_{jr}}a^{l_j})(t_i^{k_{ii}}...t_r^{k_{ir}}a^{l_i})^{-1}(t_j^{k_{jj}}...t_r^{k_{jr}}a^{l_j})^{-1}=a^{mR_{ij}}$. Now define a group
\[
G=\left<\alpha,x_1,...,x_r\ |\ x_i\alpha x_i^{-1}=\alpha^{P_i},\ x_ix_jx_i^{-1}x_j^{-1}=\alpha^{R_{ij}} \right>.
\] The group $G$ has the relations
\[
x_i\alpha=\alpha^{P_i}x_i,\ x_i\alpha^{-1}=\alpha^{-P_i}x_i,\ x_ix_j=\alpha^{R_{ij}}x_jx_i,\ x_ix_j^{-1}=x_j^{-1}\alpha^{-R_{ij}}x_i,
\]which shows that, for every fixed $i$, all the $x_i$-letters in a word with positive power can be pushed right as much as we want. Similarly, the relations
\[
\alpha x_i^{-1}=x_i^{-1}\alpha^{P_i},\ \alpha^{-1}x_i^{-1}=x_i^{-1}\alpha^{-P_i},\ x_jx_i^{-1}=x_i^{-1}\alpha^{R_{ij}}x_j,\ x_j^{-1}x_i^{-1}=x_i^{-1}x_j^{-1}\alpha^{-R_{ij}}
\] show that all the $x_i$-letters in a word with negative power can be pushed left as much as we want. Because of this, any element of $G$ is of the form $x_1^{-\lambda_1}...x_r^{-\lambda_r}\alpha^M x_r^{\delta_r}...x_1^{\delta_1}$ for $\lambda_i,\delta_i \geq 0$ and $M \in \Z$. Now let us show that $G \simeq H$. Define $\theta:G \to \Gamma_n$ by putting $\theta(\alpha)=a^m$ and $\theta(x_i)=t_i^{k_{ii}}...t_r^{k_{ir}}a^{l_i}$ for $i=1,...,r$. It is easy to check that $\theta$ is a group homomorphism and surjective, so we only need to show that $\theta$ is also injective. Indeed, let $w=x_1^{-\lambda_1}...x_r^{-\lambda_r}\alpha^M x_r^{\delta_r}...x_1^{\delta_1} \in G$ such that $\theta(w)=1$. Then
\[
{(t_1^{k_{11}}...t_r^{k_{1r}}a^{l_1})}^{-\lambda_1}...{(t_r^{k_{rr}}a^{l_r})}^{-\lambda_r}a^{mM} {(t_r^{k_{rr}}a^{l_r})}^{\delta_r}...{(t_1^{k_{11}}...t_r^{k_{1r}}a^{l_1})}^{\delta_1}=1.
\]By projecting both sides of equation above on the $t_1$-coordinate by the homomorphism $w \mapsto (w)^{t_1}$, we get $k_{11}(\delta_1-\lambda_1)=0$ and so $\delta_1=\lambda_1$. Then by conjugating the above equation on both sides by $(t_1^{k_{11}}...t_r^{k_{1r}}a^{l_1})^{\lambda_1}$ we get
\[
{(t_2^{k_{22}}...t_r^{k_{2r}}a^{l_2})}^{-\lambda_2}...{(t_r^{k_{rr}}a^{l_r})}^{-\lambda_r}a^{mM} {(t_r^{k_{rr}}a^{l_r})}^{\delta_r}...{(t_2^{k_{22}}...t_r^{k_{2r}}a^{l_2})}^{\delta_2}=1.
\] By doing this recursively we get $\delta_i=\lambda_i$ for $i=1,...,r$ and $a^{mM}=1$. Then $M=0$ (since $a$ is torsion free and $m>0$). Thus $w=x_1^{-\lambda_1}...x_r^{-\lambda_r}\alpha^0 x_r^{\lambda_r}...x_1^{\lambda_1}=1$, as desired. This completes the proof.
\end{proof}

\subsection{The $\Sigma^1$ invariant}

Let $H$ be a finite index subgroup of $\Gamma_n$, say,
\[
H=\langle {t_1}^{k_{11}}...{t_r}^{k_{1r}}a^{l_1},{t_2}^{k_{22}}...{t_r}^{k_{2r}}a^{l_2},...,{t_r}^{k_{rr}}a^{l_r},a^m\rangle\ \ \ \ \ \ (*)
\]for $k_{ii}>0$, $k_{ij}\geq 0$, $l_i \in \Z$ and $m>0$ an integer such that $\gcd(m,n)=1$ and $H \cap \left< a \right>= \left< a^m \right>$. By Theorem \ref{presforh}, we write $H$ as
\[
H=\left<\alpha,x_1,...,x_r\ |\ x_i\alpha x_i^{-1}=\alpha^{P_i},\ x_ix_jx_i^{-1}x_j^{-1}=\alpha^{R_{ij}} \right>,
\] for $P_i=p_i^{y_ik_{ii}}...p_r^{y_rk_{ir}}$ ($i=1,...,r$) and some $R_{ij} \in \Z$. Here, $\alpha=a^m$ and $x_i={t_i}^{k_{ii}}...{t_r}^{k_{ir}}a^{l_i}$. Since all the $p_i^{y_i}$ are $\geq 2$, obviously the $P_i$ also are $\geq 2$ and so it is easy to see that $\alpha$ must have torsion in the abelianized group $H^{ab}$. The $x_i$ are torsion-free, though. So we have the homeomorphism

\begin{align*}
  &\mathfrak h: S(H)\longrightarrow S^{r-1} \\
  &[\chi]\longmapsto \frac{(\chi(x_1),...,\chi(x_r))}{\Vert (\chi(x_1),...,\chi(x_r)) \Vert}.\\
\end{align*}

To compute $\Sigma^1(H)$ inside this sphere, we will use the following fact.

\begin{prop}\label{sigmafindhspecial}
Let $G$ be a finitely generated group and $H \leq G$ a finite index subgroup with inclusion $i:H \to G$ and induced map $i^*:S(G) \to S(H)$, $i^*[\chi]=[\chi \circ i]=[\chi|_H]$. Suppose that any homomorphism $\chi:H \to \R$ can be extended to a homomorphism $\hat{\chi}:G \to \R$. Then
\[
\Sigma^1(H)=i^*(\Sigma^1(G))\ \text{and}\ \Sigma^1(H)^c=i^*(\Sigma^1(G)^c).
\]
\end{prop}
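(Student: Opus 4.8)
The plan is to reduce the statement to two facts: that the restriction map $i^*$ is a bijection of character spheres, and that membership in $\Sigma^1$ is detected by restriction to a finite index subgroup. The hypothesis that every character of $H$ extends to $G$ is used precisely to guarantee surjectivity of $i^*$; the deep input is the finite index invariance of $\Sigma^1$.

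First I would show that $i^*:S(G)\to S(H)$ is a bijection. The restriction map $\mathrm{res}:\mathrm{Hom}(G,\R)\to\mathrm{Hom}(H,\R)$ is injective: if $\chi|_H=0$ then, since $[G:H]<\infty$, every $g\in G$ satisfies $g^k\in H$ for some $k\geq 1$, whence $k\chi(g)=\chi(g^k)=0$ and $\chi(g)=0$, so $\chi=0$. In particular $\chi|_H\neq 0$ whenever $\chi\neq 0$, so $i^*$ is well defined and injective on $S(G)$. For surjectivity, given $[\psi]\in S(H)$ with $\psi\neq 0$, the hypothesis provides an extension $\hat\psi:G\to\R$; then $\hat\psi\neq 0$ because $\hat\psi|_H=\psi\neq 0$, and $i^*[\hat\psi]=[\psi]$. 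Thus $\mathrm{res}$ is a linear isomorphism of finite dimensional real vector spaces and $i^*$ is the induced homeomorphism of the quotient spheres.

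Next I would invoke the invariance of $\Sigma^1$ under passage to finite index subgroups: for any nonzero $\chi\in\mathrm{Hom}(G,\R)$ one has $[\chi]\in\Sigma^1(G)$ if and only if $[\chi|_H]=i^*[\chi]\in\Sigma^1(H)$. This holds for every finite index subgroup, independently of the extension hypothesis, and may be cited from \cite{Strebel}. If one wishes to argue it directly from the Cayley graph definition, the point is that $H\hookrightarrow G$ is a quasi-isometry carrying the condition $\chi\geq 0$ to $\chi|_H\geq 0$ up to a bounded additive error, so that the positive subgraph $\Gamma(G,X)_\chi$ is connected exactly when the corresponding positive subgraph of a Cayley graph of $H$ is connected; making this coarse-connectivity comparison precise is the main technical obstacle, which is why I would rather cite the established result.

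Finally I would combine the two ingredients. By the finite index invariance, $i^*(\Sigma^1(G))\subseteq\Sigma^1(H)$, and conversely any $[\psi]\in\Sigma^1(H)$ equals $i^*[\chi]$ for the unique $[\chi]\in S(G)$ with $i^*[\chi]=[\psi]$ (surjectivity), whence $[\chi]\in\Sigma^1(G)$; this gives $\Sigma^1(H)=i^*(\Sigma^1(G))$. For the complements, injectivity of $i^*$ yields $i^*(\Sigma^1(G)^c)=i^*(S(G))\setminus i^*(\Sigma^1(G))$, and surjectivity gives $i^*(S(G))=S(H)$, so this equals $S(H)\setminus\Sigma^1(H)=\Sigma^1(H)^c$. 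This completes the argument.
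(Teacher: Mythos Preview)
Your proof is correct and follows essentially the same approach as the paper: both arguments hinge on the finite index invariance of $\Sigma^1$ (the paper cites Proposition~B1.11 of \cite{Strebel}) together with the extension hypothesis to pass from $\Sigma^1(H)$ back to $\Sigma^1(G)$. Your version is slightly more explicit in first isolating the bijectivity of $i^*$ (including the injectivity argument via $g^k\in H$) before combining the ingredients, whereas the paper proceeds directly to the two inclusions and dismisses the complement equality as ``similar''; but the mathematical content is the same.
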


\begin{proof}
By Proposition $B1.11$ in \cite{Strebel}, for any $[\chi] \in S(G)$ we have $[\chi] \in \Sigma^1(G) \Leftrightarrow [\chi|_H] \in \Sigma^1(H)$. Then $i^*(\Sigma^1(G)) \subset \Sigma^1(H)$. On the other hand, let $[\chi] \in \Sigma^1(H)$ and let $\hat{\chi}:G \to \R$ be an extension of $\chi$. We have $[\hat{\chi}|_H]=[\chi] \in \Sigma^1(H)$, so again by Proposition $B1.11$ in \cite{Strebel} we have $[\hat{\chi}] \in \Sigma^1(G)$. Then $[\chi]=i^*[\hat{\chi}] \in i^*(\Sigma^1(G))$, as desired. The other equality is similar.
\end{proof}

\begin{lema}\label{extensionlemma}
Let $H$ be a finite index subgroup of $\Gamma_n$, say,
\[
H=\langle {t_1}^{k_{11}}...{t_r}^{k_{1r}}a^{l_1},{t_2}^{k_{22}}...{t_r}^{k_{2r}}a^{l_2},...,{t_r}^{k_{rr}}a^{l_r},a^m\rangle\ \ \ \ \ \ (*)
\]for $k_{ii}>0$, $k_{ij}\geq 0$, $l_i \in \Z$ and $m>0$ an integer such that $\gcd(m,n)=1$ and $H \cap \left< a \right>= \left< a^m \right>$. Then every homomorphism $\xi:H \to \R$ can be extended to a homomorphism $\chi:\Gamma_n \to \R$.
\end{lema}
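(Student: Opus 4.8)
The plan is to extend $\xi$ by prescribing values on the generators $a, t_1, \ldots, t_r$ of $\Gamma_n$ and then checking the defining relations. The crucial preliminary observation is that any homomorphism into the torsion-free abelian group $\R$ must vanish on torsion elements of the abelianization. In $\Gamma_n^{ab}$ the relation $t_i a t_i^{-1} = a^{p_i^{y_i}}$ becomes $a^{p_i^{y_i}-1}=1$, so $a$ is a torsion element; hence any homomorphism $\chi:\Gamma_n \to \R$ satisfies $\chi(a)=0$ and is completely determined by the values $c_i := \chi(t_i) \in \R$, which may be chosen freely. Likewise, from the presentation of $H$ in Theorem \ref{presforh}, the relation $x_i \alpha x_i^{-1} = \alpha^{P_i}$ with $P_i \geq 2$ shows that $\alpha = a^m$ is torsion in $H^{ab}$, so $\xi(\alpha)=0$ and $\xi$ is determined by the real numbers $\xi(x_1), \ldots, \xi(x_r)$.

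So the task reduces to choosing $c_1, \ldots, c_r$ so that the homomorphism $\chi$ defined by $\chi(a)=0$, $\chi(t_i)=c_i$ restricts to $\xi$ on $H$. Since $\chi(a)=0$, for each generator $x_i = t_i^{k_{ii}} \cdots t_r^{k_{ir}} a^{l_i}$ we compute $\chi(x_i) = \sum_{j=i}^r k_{ij} c_j$ (the $a^{l_i}$ term drops out). Requiring $\chi(x_i)=\xi(x_i)$ for all $i$ yields the linear system $\sum_{j=i}^r k_{ij} c_j = \xi(x_i)$, $i=1, \ldots, r$. This system is upper triangular in $c_1, \ldots, c_r$ with diagonal entries $k_{ii}>0$, hence has a unique real solution obtained by back-substitution from $i=r$ down to $i=1$.

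With these $c_i$ fixed, I first check that $\chi$ is a well-defined homomorphism: it respects $t_i t_j = t_j t_i$ trivially, and both sides of $t_i a t_i^{-1} = a^{p_i^{y_i}}$ map to $0$ because $\chi(a)=0$. Finally I verify that $\chi|_H = \xi$: the two homomorphisms agree on the generating set $\{\alpha, x_1, \ldots, x_r\}$ of $H$, since $\chi(\alpha) = m\,\chi(a) = 0 = \xi(\alpha)$ and $\chi(x_i)=\xi(x_i)$ by construction, and homomorphisms agreeing on a generating set coincide.

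There is no serious obstacle here; the only point to get right is the bookkeeping guaranteeing solvability, namely that the ``staircase'' shape of the chosen generators of $H$ makes the coefficient matrix $(k_{ij})_{j \geq i}$ upper triangular with nonzero diagonal, which is exactly the normal form delivered by Theorem \ref{fisgamman}. The torsion-vanishing remarks are what allow me to ignore the $a$-coordinates entirely and reduce the extension problem to this triangular linear algebra over $\R$.
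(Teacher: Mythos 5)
Your proof is correct and follows essentially the same route as the paper: set $\chi(a)=0$ (forced since $a$ is torsion in $\Gamma_n^{ab}$) and solve the resulting upper-triangular linear system for the $\chi(t_i)$ by back-substitution from $i=r$ down to $i=1$. Your additional remarks making explicit why $\xi(a^m)=0$ and why agreement on generators suffices are correct and only tighten what the paper leaves implicit.
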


\begin{proof}
The equation $\chi|_H=\xi$ is equivalent to a system of $r$ equations
\[
\begin{cases}
\chi({t_1}^{k_{11}}...{t_r}^{k_{1r}}a^{l_1})=\xi({t_1}^{k_{11}}...{t_r}^{k_{1r}}a^{l_1}),\\
\chi({t_2}^{k_{22}}...{t_r}^{k_{2r}}a^{l_2})=\xi({t_2}^{k_{22}}...{t_r}^{k_{2r}}a^{l_2}),\\
\ \ \ \ \ \ \ \ \ \ \ \ \ \ \ \ \ \ \ \ \ \ \ \vdots \\
\ \ \ \ \ \ \ \ \chi({t_r}^{k_{rr}}a^{l_r})=\xi({t_r}^{k_{rr}}a^{l_r}).\\
\end{cases}
\]So, to create such an extension $\chi$ we just have to define $\chi(a)=0$ and define the real numbers $\chi({t_i})$ satisfying equations $(1)$ to $(r)$ above. Equation $(r)$ is equivalent to
\[
k_{rr}\chi(t_r)=\xi(t_r^{k_{rr}}a^{l_r}),
\]so if we define $\chi(t_r)=\frac{1}{k_{rr}}\xi(t_r^{k_{rr}}a^{l_r})$, equation $(r)$ is satisfied. Similarly, equation $(r-1)$ is equivalent to
\[
k_{r-1,r-1}\chi(t_{r-1})+k_{r-1,r}\chi(t_r)=\xi(t_{r-1}^{k_{r-1,r-1}}t_r^{k_{r-1,r}}a^{l_{r-1}}),
\]so if we define $\chi(t_{r-1})=\frac{1}{k_{r-1,r-1}}\xi(t_{r-1}^{k_{r-1,r-1}}t_r^{k_{r-1,r}}a^{l_{r-1}})-\frac{k_{r-1,r}}{k_{r-1,r-1}}\chi(t_r)$, equation $(r-1)$ is satisfied. By doing this recursively to all $i$, we are done.
\end{proof}

\begin{theorem}\label{meusigmah}
Let $H$ be a finite index subgroup of $\Gamma_n$, say,
\[
H=\langle {t_1}^{k_{11}}...{t_r}^{k_{1r}}a^{l_1},{t_2}^{k_{22}}...{t_r}^{k_{2r}}a^{l_2},...,{t_r}^{k_{rr}}a^{l_r},a^m\rangle\ \ \ \ \ \ (*)
\]for $k_{ii}>0$, $k_{ij}\geq 0$, $l_i \in \Z$ and $m>0$ an integer such that $\gcd(m,n)=1$ and $H \cap \left< a \right>= \left< a^m \right>$, and let $\alpha=a^m$ and $x_i={t_i}^{k_{ii}}...{t_r}^{k_{ir}}a^{l_i}$ be its generators. Then $\Sigma^1(H)^c=\{[\xi_1],...,[\xi_r]\}$, where $\xi_i(x_j)=k_{ji}$ if $j \leq i$ and $\xi_i(x_j)=0$ if $j>i$. 

In other words, if we identify $S(H) \simeq S^{r-1}$ as we did above, then
\[
\Sigma^1(H)^c= \Bigg\{ \frac{(k_{11},0,0,...,0)}{\Vert (k_{11},0,0,...,0) \Vert},\frac{(k_{12},k_{22},0,...,0)}{\Vert (k_{12},k_{22},0,...,0) \Vert},...,\frac{(k_{1r},k_{2r},k_{3r},...,k_{rr})}{\Vert (k_{1r},k_{2r},k_{3r},...,k_{rr}) \Vert}\Bigg\}.
\]
\end{theorem}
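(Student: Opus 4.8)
The plan is to derive the statement as an essentially immediate consequence of the preparatory results already in place, namely Proposition~\ref{sigmafindhspecial}, Lemma~\ref{extensionlemma}, and the computation of $\Sigma^1(\Gamma_n)$ in Theorem~\ref{meusigmagamman}. The only genuine computation will be the evaluation of the restricted characters on the generators of $H$.

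First I would verify the hypothesis of Proposition~\ref{sigmafindhspecial} for the pair $H \le \Gamma_n$ with inclusion $i \colon H \hookrightarrow \Gamma_n$. This is exactly the content of Lemma~\ref{extensionlemma}: every homomorphism $\xi \colon H \to \R$ extends to a homomorphism $\Gamma_n \to \R$. Hence Proposition~\ref{sigmafindhspecial} applies and gives
\[
\Sigma^1(H)^c = i^*\bigl(\Sigma^1(\Gamma_n)^c\bigr),
\]
where $i^*[\chi] = [\chi|_H]$.

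Next I would feed in Theorem~\ref{meusigmagamman}, which yields $\Sigma^1(\Gamma_n)^c = \{[\chi_1], \dots, [\chi_r]\}$ with $\chi_i(t_i) = 1$ and $\chi_i(t_j) = 0$ for $j \neq i$. Thus $\Sigma^1(H)^c = \{[\chi_1|_H], \dots, [\chi_r|_H]\}$, and it remains to identify each restriction $\xi_i := \chi_i|_H$. The key observation is that $\chi_i(a) = 0$: the relation $t_i a t_i^{-1} = a^{p_i^{y_i}}$ abelianizes to $a^{p_i^{y_i}-1} = 1$, so $a$ is torsion in $\Gamma_n^{ab}$ and every real character of $\Gamma_n$ kills it. Evaluating $\chi_i$ on the generators $x_j = t_j^{k_{jj}} \cdots t_r^{k_{jr}} a^{l_j}$ then gives
\[
\xi_i(x_j) = \sum_{s=j}^{r} k_{js}\,\chi_i(t_s) =
\begin{cases}
k_{ji}, & j \le i,\\
0, & j > i,
\end{cases}
\]
since the index $s=i$ lies in the range $\{j, \dots, r\}$ exactly when $j \le i$. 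This is precisely the description of $\xi_i$ in the statement, and passing through the homeomorphism $\mathfrak h$ produces the displayed normalized vectors $(k_{1i}, \dots, k_{ii}, 0, \dots, 0)/\Vert \cdot \Vert$.

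Finally I would confirm that the $r$ classes $[\xi_1], \dots, [\xi_r]$ are pairwise distinct, so that the complement genuinely consists of $r$ points. This is immediate from their triangular shape: the vector representing $[\xi_i]$ is supported on the coordinates $1, \dots, i$ with nonzero last entry $k_{ii} > 0$, so the position of the final nonvanishing coordinate distinguishes them and none is a positive multiple of another. I do not anticipate any serious obstacle here: the entire content of the theorem sits in the earlier lemmas and propositions, and once the vanishing $\chi_i(a) = 0$ is recorded, the argument is bookkeeping.
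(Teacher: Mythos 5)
Your proposal is correct and follows essentially the same route as the paper: invoke Lemma \ref{extensionlemma} to satisfy the hypothesis of Proposition \ref{sigmafindhspecial}, conclude $\Sigma^1(H)^c=i^*(\Sigma^1(\Gamma_n)^c)$, and evaluate the restricted characters $\chi_i|_H$ on the generators $x_j$ using Theorem \ref{meusigmagamman}. Your added remarks that $\chi_i(a)=0$ (since $a$ is torsion in the abelianization) and that the $r$ resulting classes are pairwise distinct only make explicit what the paper leaves as ``easy to see.''
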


\begin{proof}
By Lemma \ref{extensionlemma}, $\Sigma^1(H)^c=i^*(\Sigma^1(\Gamma_n)^c)$ so by Theorem \ref{meusigmagamman}, $\Sigma^1(H)^c=\{[\chi_1|_H],...,[\chi_r|_H]\}$. Using that $\chi_i(t_j)=1$ if $i=j$ and $\chi_i(t_j)=0$, it is easy to see that the image of $[\chi_i|_H]$ (which we denote by $[\xi_i]$) under the homeomorphism $S(H) \simeq S^{r-1}$ described above is $\frac{(k_{1i},...,k_{ii},0,...,0)}{\Vert (k_{1i},...,k_{ii},0,...,0) \Vert}$. This completes the proof.
\end{proof}

\subsection{Finite index subgroups that are not $\Gamma_k$}

In \cite{Bogopolski} it was shown that every finite index subgroup of a solvable Baumslag-Solitar group $BS(1,n)$ is also (isomorphic to) a solvable Baumslag-Solitar group $BS(1,n^k)$ for some $k \geq 1$. Since the groups $\Gamma_n$ are generalizations of $BS(1,n)$, it is natural to ask whether
every finite index subgroup of $\Gamma_n$ is also (isomorphic to) another $\Gamma_k  \text{~for some~}  k \geq 2$.
In this section we show that this question has a negative answer. Below, we consider a specific class of finite index subgroups $H$ of $\Gamma_n$ for which we give necessary and sufficient conditions for $H$ to be isomorphic to $\Gamma_k$ for some $k\ge 2$. 

\begin{theorem}\label{hnotgamman}
Let $H$ be a finite index subgroup of $\Gamma_n$ such that
\[
H=\langle t_1^{k_{11}}t_2^{k_{12}}...t_r^{k_{1r}},t_2^{k_{22}}...t_r^{k_{2r}},...,t_r^{k_{rr}},a^m \rangle
\] with $k_{11}>0$, $0 \leq k_{ij}<k_{ii}$ for all $1 \leq i<j \leq r$ and $m>0$ such that $\gcd(m,n)=1$. Then
\[
H \simeq \Gamma_k\ \text{for some}\ k \geq 2 \text{~if and only if~} k_{ij}=0\ \text{for all}\ 1 \leq i<j \leq r.
\]
\end{theorem}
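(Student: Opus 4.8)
The plan is to prove the two implications by different means: the forward direction $k_{ij}=0\Rightarrow H\simeq\Gamma_k$ comes directly from the presentation of $H$, while the converse $H\simeq\Gamma_k\Rightarrow k_{ij}=0$ is obtained by using $\Sigma^1$ as an isomorphism invariant, converting the abstract isomorphism into a unimodularity statement about an integer matrix built from the $k_{ij}$.

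For the forward direction, suppose $k_{ij}=0$ for all $i<j$. Then the generators of $H$ reduce to $x_i=t_i^{k_{ii}}$ and $\alpha=a^m$, so in the presentation of Theorem \ref{presforh} we have $P_i=p_i^{y_ik_{ii}}$, a prime power. Moreover the chosen generators contain no $a^{l_i}$ factor (i.e. $l_i=0$), so $l_iP_i(1-P_j)-l_jP_j(1-P_i)=0$ and hence every $R_{ij}=0$. Thus
\[
H\simeq\left<\alpha,x_1,\dots,x_r\ |\ x_i\alpha x_i^{-1}=\alpha^{p_i^{y_ik_{ii}}},\ x_ix_j=x_jx_i\right>,
\]
which is exactly $\Gamma(S)$ for the set $S=\{p_1^{y_1k_{11}},\dots,p_r^{y_rk_{rr}}\}$ of pairwise coprime integers $\geq 2$. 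Taking $k=p_1^{y_1k_{11}}\cdots p_r^{y_rk_{rr}}$, this product is the prime decomposition of $k$, so $\Gamma(S)=\Gamma_k$ and $H\simeq\Gamma_k$ with $k\geq 2$.

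For the converse I would argue as follows. Since $\Sigma^1$ is invariant under isomorphisms, an isomorphism $\phi\colon H\to\Gamma_k$ induces an isomorphism of the free parts of the abelianizations, i.e. an element $B\in GL_r(\Z)$ (this already forces both free ranks to be $r$, so $k$ has exactly $r$ distinct prime factors), and dually a self-homeomorphism of the character sphere realized projectively by $B$ that carries $\Sigma^1(H)^c$ bijectively onto $\Sigma^1(\Gamma_k)^c$. By Theorem \ref{meusigmagamman}, $\mathfrak h(\Sigma^1(\Gamma_k)^c)$ consists of the $r$ coordinate directions $e_1,\dots,e_r$, whose primitive integer generators form the standard unimodular basis of $\Z^r$; by Theorem \ref{meusigmah}, $\mathfrak h(\Sigma^1(H)^c)$ consists of the directions of the vectors $v_i=(k_{1i},\dots,k_{ii},0,\dots,0)$. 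Writing $g_i=\gcd(k_{1i},\dots,k_{ii})$ and $w_i=v_i/g_i$ for the positive primitive generator of the $i$-th ray, the key observation is that $B$ sends primitive vectors to primitive vectors and positive rays to positive rays, so $Bw_i=e_{\sigma(i)}$ for some permutation $\sigma$. Hence the upper-triangular matrix $W=[w_1|\cdots|w_r]$ satisfies $BW=P_\sigma$ and therefore $W\in GL_r(\Z)$.

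The conclusion then falls out of a determinant computation: since $W$ is upper triangular with positive diagonal entries $k_{ii}/g_i$, we get $\det W=\prod_i (k_{ii}/g_i)=\pm1$, which forces each factor $k_{ii}/g_i=1$, i.e. $g_i=k_{ii}$ and hence $k_{ii}\mid k_{ji}$ for every $j<i$. Invoking the normal-form bounds from Theorem \ref{fisgamman}, under which the above-diagonal entries of the $i$-th vector satisfy $0\leq k_{1i},\dots,k_{i-1,i}<k_{ii}$, the divisibility $k_{ii}\mid k_{ji}$ forces $k_{ji}=0$ for all $j<i$, so every off-diagonal $k_{ij}$ vanishes. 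I expect the delicate step to be precisely this interplay: the configurations $\Sigma^1(H)^c$ and $\Sigma^1(\Gamma_k)^c$ agree only up to a $GL_r(\Z)$-action that does not preserve coordinates, so one cannot compare the entries $k_{ij}$ directly; the argument must pass to primitive generators and exploit unimodularity, and it is exactly the Hermite-type bounds of the chosen normal form that turn the resulting divisibility into the vanishing of the off-diagonal exponents.
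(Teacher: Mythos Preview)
Your forward direction matches the paper's exactly. For the converse you take a genuinely different route, and it is correct.

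The paper argues internally in $H$: it takes elements $X_i,A\in H$ realizing the $\Gamma_k$-generators, projects the relations $X_iAX_i^{-1}=A^{q_i^{z_i}}$ to $\Z^r$ to force $A\in\ker\pi$, and then compares the resulting conjugation identities with $x_j\alpha x_j^{-1}=\alpha^{P_j}$; a prime-factorization analysis shows that (after reordering) $X_i=x_i\cdot(\text{kernel})$ and that each $P_i=p_i^{y_ik_{ii}}\cdots p_r^{y_rk_{ir}}$ must be a prime power, which kills the off-diagonal exponents. Your argument bypasses this computation by using $\Sigma^1$ directly as an isomorphism invariant: the induced map on the free abelianizations gives a matrix in $GL_r(\Z)$ whose projective action must carry the standard coordinate rays (the points of $\Sigma^1(\Gamma_k)^c$) bijectively to the rays of the $v_i$ (the points of $\Sigma^1(H)^c$), hence sends primitive vectors to primitive vectors; this forces the upper-triangular matrix $W=[w_1|\cdots|w_r]$ to be unimodular, so its positive diagonal entries $k_{ii}/g_i$ are all $1$, and the Hermite-type bounds $0\le k_{ji}<k_{ii}$ finish it off. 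Your approach is cleaner and leverages the $\Sigma^1$ computations (Theorems \ref{meusigmagamman} and \ref{meusigmah}) in a more structural way; the paper's approach is more hands-on and also identifies $k$ (and its prime factors) explicitly along the way. One small expository wrinkle: the dual of $B$ naturally goes from $S(\Gamma_k)$ to $S(H)$, not the other way, but since you only need \emph{some} element of $GL_r(\Z)$ matching the two sets of primitive ray generators, the direction is immaterial to the argument.
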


\begin{proof}
Suppose first that $k_{ij}=0$ for all $1 \leq i<j \leq r$. Then from Theorem \ref{presforh} we immediately get that $H \simeq \Gamma_k$ for $k=p_1^{y_1k_{11}}...p_r^{y_rk_{rr}}$. Suppose now that $H \simeq \Gamma_k$ for some $k \geq 2$ and write $k=q_1^{z_1}...q_s^{z_s}$, $q_1<q_2<...<q_s$, $z_i \geq 1$ the prime decomposition of $k$. Then in particular $s=card(\Sigma^1(\Gamma_k)^c)=card(\Sigma^1(H)^c)=r$, so $k=q_1^{z_1}...q_r^{z_r}$. By Theorem \ref{presforh}, $H$ has the presentation
\[
H=\langle \alpha,x_1,....,x_r\ |\ x_i\alpha x_i^{-1}=\alpha^{n_i},\ x_ix_j=x_jx_i\ \text{for all}\ i,j \rangle,
\]where $n_i=p_i^{y_ik_{ii}}...p_r^{y_rk_{ir}}$. There is also a split exact sequence
\[
1 \to ker(\pi) \to H \stackrel{\pi}{\to} {\Z}^r \to 1
\]where $\pi(x_i)=e_i$, $\pi(\alpha)=0$ and $ker(\pi)$ abelian. In particular, every element of $H$ can be written as $x_1^{\lambda_1}...x_r^{\lambda_r}u$ for some $\lambda_i \in \Z$ and $u \in ker(\pi)$. Since $H \simeq \Gamma_k$, then there must be $r+1$ elements inside $H$ (which are the images of the analogous $r+1$ elements in $\Gamma_k$), say, $X_i=x_1^{k_{i1}'}...x_r^{k_{ir}'}u_i$, $1 \leq i \leq r$ and $A=x_1^{\tilde{k_1}}...x_r^{\tilde{k_r}}\tilde{u}$ for some $k_{ij}',\tilde{k_i} \in \Z$ and $u_i,\tilde{u} \in ker(\pi)$, such that $
H=\langle X_1,...,X_r,A \rangle$ and $X_iAX_i^{-1}=A^{q_i^{z_i}}\ \text{for all}\ 1 \leq i \leq r$. By projecting any of these equations on $\Z^r$ we obtain $\tilde{k_1}=...=\tilde{k_r}=0$ and so $A=\tilde{u}=x_1^{-\lambda_1}...x_r^{-\lambda_r}\alpha^M x_r^{\lambda_r}...x_1^{\lambda_1}$ for some $\lambda_i \geq 0$ and $M \neq 0$. By replacing this in the $r$ equations above and using that $ker(\pi)$ is abelian and the $x_i$'s commute with each other, we obtain the $r$ equations in $H$
\begin{equation}
\label{(i)}
x_1^{k_{i1}'}...x_r^{k_{ir}'}\alpha^M x_r^{-k_{ir}'}...x_1^{-k_{i1}'}=\alpha^{Mq_i^{z_i}}
\end{equation}
for each $1 \leq i \leq r$. If a power $k_{ij}'$ is nonnegative we can use a relation of $H$ to conjugate $\alpha^M$. If it is negative, though, then since all the $x_i$ commute we can push the two $x_j$ from the left side to the right side of equation \eqref{(i)} and use the (now positive) power $-k_{ij}'$ to conjugate $\alpha^{Mq_i^{z_i}}$. Thus equation \eqref{(i)} will always imply an equality of a power of $\alpha^M$ with a power of $\alpha^{Mq_i^{z_i}}$. Since $H$ is torsion-free and $M \neq 0$, this yields an equation of prime decomposition which depends on the sign of the $k_{ij}'$. After a careful analysis of the possible prime decomposition equations we can conclude that $k_{ij}'$ is $1$ if $i=j$ and $0$ otherwise. The equations \eqref{(i)} become $x_i\alpha^Mx_i^{-1}=\alpha^{Mp_i^{z_i}}$. This implies $p_i^{y_ik_{ii}}p_{i+1}^{y_{i+1}k_{i,i+1}}...p_r^{y_rk_{ir}}=p_i^{z_i}$,
which implies $k_{i,i+1}=...=k_{ir}=0$. Since $i$ is arbitrary, we have that $k_{ij}=0$ for any $1 \leq i<j \leq r$, as desired.
\end{proof}

\section{Convex polytopes and property $R_\infty$}

In this section we show that finding a special kind of invariant convex polytope in the character sphere $S(G)$ is enough to guarantee property $R_\infty$ for a finitely generated group $G$ (Theorem \ref{meuteopolytope}). We will use a slightly more general version of Theorem $3.3$ in \cite{DaciDess}, which we state below. The proof is the same given there, just by observing that the authors didn't use directly the definition of $\Sigma^1(G)^c$ but only the fact that it is invariant in $S(G)$ (that is, invariant under all permutations of the form $[\chi]\mapsto[\chi \circ \varphi]$ for $\varphi \in Aut(G)$).

\begin{theorem}\label{teodacidessgeral}
Let $G$ be a finitely generated group. Suppose there is a nonempty and finite subset $A \subset S(G)$ which is invariant in $S(G)$, consisting only of rational points and contained in an open hemisphere of $S(G)$. Then $G$ has property $R_\infty$. \qed
\end{theorem}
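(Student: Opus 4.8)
The final statement to prove is Theorem \ref{teodacidessgeral}: if $G$ is finitely generated and there is a nonempty finite subset $A \subset S(G)$ that is invariant under the $\mathrm{Aut}(G)$-action, consists of rational points, and lies in an open hemisphere, then $G$ has property $R_\infty$.

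The plan is to follow the strategy of Theorem 3.3 in \cite{DaciDess}, which the excerpt explicitly says we are reproducing with a minor generalization. The key object is the natural action of $\mathrm{Aut}(G)$ on the character sphere: each $\varphi \in \mathrm{Aut}(G)$ induces a self-homeomorphism $\varphi^* : S(G) \to S(G)$, $[\chi] \mapsto [\chi \circ \varphi]$. The crucial input is a result (from \cite{DaciDess}, and developed in the paper's Section 4 around Theorem \ref{meuteopolytope}) connecting fixed points of $\varphi^*$ to the twisted conjugacy count $R(\varphi)$: roughly, if $\varphi^*$ has a fixed point $[\chi]$ with $\chi \circ \varphi = \chi$ (a genuine eigenvalue-$1$ character, not just projectively fixed), then the induced homomorphism on the relevant abelianized quotient forces $R(\varphi) = \infty$. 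So the entire problem reduces to producing, for every $\varphi \in \mathrm{Aut}(G)$, such a fixed character.

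Let me write out the argument. Fix an arbitrary $\varphi \in \mathrm{Aut}(G)$. Because $A$ is invariant, $\varphi^*$ restricts to a permutation of the finite set $A$; hence some power $\psi^* := (\varphi^*)^k = (\varphi^k)^*$ fixes every point of $A$ pointwise, where $k = |A|!$ (or the order of the permutation). Pick any $[\chi] \in A$ (nonempty by hypothesis). Then $[\chi \circ \varphi^k] = [\chi]$, so $\chi \circ \varphi^k = \lambda \chi$ for some $\lambda > 0$. Now I use that $[\chi]$ is a rational point together with the open-hemisphere condition. The open-hemisphere hypothesis is what rules out the scaling factor $\lambda$ being problematic: one shows $\lambda = 1$. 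The cleanest route is this. Consider the linear map $T$ on $\mathrm{Hom}(G,\mathbb{R})$ induced by $\varphi^k$ (precomposition). The set $A$ lies in an open hemisphere, meaning there is a character $\chi_0$ (equivalently a linear functional on the ambient $\mathbb{R}^d$ where $S(G) \simeq S^{d-1}$) with $\langle v, a \rangle > 0$ for all $a \in A$ under the sphere identification. The convex cone generated by $A$ is then a pointed cone (contains no line), and $T$ maps each ray $\mathbb{R}_{>0}\chi$ to itself for $\chi$ representing a point of $A$. A linear map fixing a ray acts on it by a positive scalar $\lambda$; to see $\lambda = 1$, use rationality: $\chi$ can be taken to have integer coordinates (it is a rational point), and $\varphi^k$ is induced by a group automorphism, hence the matrix of $T$ in the integral basis dual to a basis of $G^{ab}/\mathrm{torsion}$ lies in $GL_d(\mathbb{Z})$. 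An integer matrix in $GL_d(\mathbb{Z})$ sending an integer vector $\chi$ to $\lambda\chi$ forces $\lambda$ to be a positive rational whose reciprocal is also an algebraic integer dividing entries symmetrically; since $\det T = \pm 1$ and $T\chi = \lambda\chi$ with $\chi$ primitive integral, $\lambda$ must be a positive integer dividing the content, and applying the same to $T^{-1}$ (also integral) gives $\lambda^{-1}$ integral, so $\lambda = 1$. Thus $\chi \circ \varphi^k = \chi$ exactly.

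Having produced a nonzero character $\chi$ with $\chi \circ \varphi^k = \chi$, I invoke the criterion from \cite{DaciDess} (the same machinery underlying Theorem \ref{meuteopolytope}): a nonzero real character invariant under $\varphi^k$ yields $R(\varphi^k) = \infty$, and then the standard fact that $R(\varphi^k) = \infty \Rightarrow R(\varphi) = \infty$ (an epimorphism from the twisted-conjugacy classes of $\varphi^k$ onto those of $\varphi$, or equivalently $R(\varphi) < \infty$ would force $R(\varphi^k) < \infty$) completes the argument. Since $\varphi$ was arbitrary, $G$ has property $R_\infty$. The main obstacle is the step pinning down $\lambda = 1$: this is exactly where all three hypotheses — invariance (to get a power fixing $A$), rationality (to force integrality of $T$ and of the eigenvalue), and the open-hemisphere condition (to guarantee the fixed ray is a genuine ray on which $T$ acts by a \emph{positive} scalar rather than flipping sign, so that the fixed point of $\varphi^*$ lifts to an honest fixed character of $\varphi^k$) — are used together, and getting a nonzero \emph{invariant} (not merely projectively invariant) character is the crux that makes the $R_\infty$ criterion applicable.
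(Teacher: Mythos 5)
Your construction of a nonzero character fixed by a power of $\varphi$ is sound, but the final step is a genuine gap: the implication ``$R(\varphi^k)=\infty \Rightarrow R(\varphi)=\infty$'' that you invoke as a standard fact is false. Take $G=\mathbb{Z}$ and $\varphi(x)=-x$. Then $h\sim_\varphi g$ if and only if $h=g+2z$ for some $z$, so $R(\varphi)=2$, while $\varphi^2=\mathrm{id}$ and $R(\mathrm{id}_{\mathbb{Z}})$ is the number of ordinary conjugacy classes of $\mathbb{Z}$, which is infinite; so $R(\varphi)<\infty$ does not force $R(\varphi^k)<\infty$, and there is no epimorphism from the $\varphi^k$-classes onto the $\varphi$-classes in general. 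The same example shows that producing a nonzero $\chi$ with $\chi\circ\varphi^k=\chi$ is not enough to conclude $R(\varphi)=\infty$: every character of $\mathbb{Z}$ is fixed by $\varphi^2$ there. (Note that in this example the invariant set $\{[\chi],[-\chi]\}$ fails the open-hemisphere hypothesis, which is exactly the hypothesis you must exploit to get a character fixed by $\varphi$ itself rather than by a power.)

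The proof the paper is pointing to (Theorem 3.3 of Gon\c{c}alves--Kochloukova, which carries over verbatim once one observes that only the invariance of the finite set is used) avoids passing to a power. Normalize each $[\chi_i]\in A$ so that its representative $\chi_i$ has image exactly $\mathbb{Z}$ (possible by rationality, and this representative is unique). Since $\varphi$ is surjective, $\chi_i\circ\varphi$ also has image $\mathbb{Z}$, so $[\chi_i\circ\varphi]=[\chi_{\sigma(i)}]$ forces $\chi_i\circ\varphi=\chi_{\sigma(i)}$ on the nose, where $\sigma$ is the permutation of $A$ induced by $\varphi^*$. Set $\chi=\sum_i\chi_i$; then $\chi\circ\varphi=\chi$, and $\chi\neq 0$ because all the $\chi_i$ pair strictly positively with a common vector (the open-hemisphere hypothesis), so the sum cannot vanish. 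Since $\chi(zg\varphi(z)^{-1})=\chi(g)$, the nonzero character $\chi$ is constant on $\varphi$-twisted conjugacy classes and has infinite image, whence $R(\varphi)=\infty$. Your $GL_d(\mathbb{Z})$ eigenvalue argument is a correct (if heavier) substitute for the image-normalization step, but the orbit sum is what lets the argument stay with $\varphi$ rather than $\varphi^k$; replacing your final reduction with it repairs the proof.
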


Let $G$ be a finitely generated group whose abelianized group $G^{ab}$ has free rank $n$. Consider the homeomorphism

\begin{align*}
  &\mathfrak h:S(G)\longrightarrow S^{n-1} \\
  &[\chi]\longmapsto \frac{(\chi(x_1),...,\chi(x_n))}{\Vert (\chi(x_1),...,\chi(x_n)) \Vert},\\
\end{align*} where the $x_i \in G$ are the free-abelian generators of $G^{ab}$. Given $\varphi \in Aut(G)$, we have the induced homeomorphism $\varphi^*:S(G) \to S(G)$ with $\varphi^*[\chi]=[\chi \circ \varphi]$. Let $\varphi^S:S^{n-1} \to S^{n-1}$ be the composition $\varphi^S=\mathfrak h \circ \varphi^* \circ \mathfrak h^{-1}$.

By the definition above, $K \subset S(G)$ is invariant in $S(G)$ if and only if $\mathfrak h(K)$ is invariant under $\varphi^S$ for all $\varphi \in Aut(G)$. From now on, we assume the standard definitions of convex subsets and convex hulls of euclidean spaces $\R^d$. For spherical objects, the definitions will be the following:

\begin{defi}
Let $A \subset S^n \subset \R^{n+1}$ and suppose $A$ is contained in an open hemisphere of $S^n$, say, $A \subset O(v)=\{x \in S^n\ |\ \langle x,v \rangle > 0\}$ for some $v \in S^n$. We say that $A$ is (spherically) convex if for any $a_1,a_2 \in A$, $\gamma_{a_1,a_2}(t)=\frac{(1-t)a_1+ta_2}{\Vert (1-t)a_1+ta_2 \Vert} \in A$ for all $t \in [0,1]$. The convex hull of any subset $A \subset O(v)$ is the smallest convex subset of $O(v)$ which contains $A$ and is denoted by $conv(A)$.
\end{defi}

It is an easy task to show that $conv(A)$ above can be described as
\[
conv(A)=\left\{\frac{t_1a_1+...+t_ma_m}{\Vert t_1a_1+...+t_ma_m \Vert}\ |\ m \geq 1, a_i \in A, t_i>0 \right\}.
\] The following lemma shows a special property of the homeomorphisms $\varphi^S$.

\begin{lema}\label{meulema4}
The homeomorphism $\varphi^S: S^{n-1} \to S^{n-1}$ maps convex hulls to convex hulls. Precisely, let $A \subset O(v)$ and suppose $\varphi^S(A) \subset O(w)$ for some $w$. Then $\varphi^S(conv(A)) = conv(\varphi^S(A))$.
\end{lema}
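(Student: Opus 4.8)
The plan is to recognize that $\varphi^S$ is nothing but an invertible linear map of $\R^n$ followed by radial normalization, and that both the spherical convex hull and the map $\varphi^S$ are compatible with the operation ``take a positive linear combination and normalize''. Once this is in place, the equality becomes a matter of rescaling weights.

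First I would make the linear structure explicit. Precomposition with $\varphi$ defines a map $\varphi^{\#}:Hom(G,\R)\to Hom(G,\R)$, $\chi\mapsto\chi\circ\varphi$, and this is $\R$-linear since $(\chi_1+\chi_2)\circ\varphi=\chi_1\circ\varphi+\chi_2\circ\varphi$ and $(r\chi)\circ\varphi=r(\chi\circ\varphi)$. Because $\varphi$ is an automorphism, $\varphi^{\#}$ is invertible with inverse $(\varphi^{-1})^{\#}$. Identifying $Hom(G,\R)\cong\R^n$ via $\chi\mapsto(\chi(x_1),\dots,\chi(x_n))$ --- which is exactly the linear map underlying $\mathfrak h$ --- transports $\varphi^{\#}$ to an invertible linear map $T:\R^n\to\R^n$. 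Since $\mathfrak h$ is just this identification followed by normalization, one checks directly that
\[
\varphi^S(x)=\frac{Tx}{\Vert Tx\Vert}\qquad\text{for all }x\in S^{n-1}.
\]
In particular $Tx\neq 0$ for every $x\in S^{n-1}$ because $T$ is invertible, so the right-hand side always makes sense.

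Next I would feed the explicit description of the convex hull into this formula. Every point of $conv(A)$ has the form $\frac{t_1a_1+\dots+t_ma_m}{\Vert t_1a_1+\dots+t_ma_m\Vert}$ with $a_i\in A$ and $t_i>0$. Applying $\varphi^S$ and using linearity of $T$ together with the positive homogeneity of the normalization $y\mapsto y/\Vert y\Vert$, I get
\[
\varphi^S\!\left(\frac{\sum_i t_ia_i}{\Vert\sum_i t_ia_i\Vert}\right)=\frac{T(\sum_i t_ia_i)}{\Vert T(\sum_i t_ia_i)\Vert}=\frac{\sum_i t_i\,Ta_i}{\Vert\sum_i t_i\,Ta_i\Vert}=\frac{\sum_i s_i\,\varphi^S(a_i)}{\Vert\sum_i s_i\,\varphi^S(a_i)\Vert},
\]
where $s_i=t_i\Vert Ta_i\Vert>0$ and I used $Ta_i=\Vert Ta_i\Vert\,\varphi^S(a_i)$. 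By the same description the last expression is a point of $conv(\varphi^S(A))$; here the hypothesis $\varphi^S(A)\subset O(w)$ guarantees that $conv(\varphi^S(A))$ is defined and that the denominators are nonzero, since $\langle\sum_i s_i\varphi^S(a_i),w\rangle=\sum_i s_i\langle\varphi^S(a_i),w\rangle>0$. This gives $\varphi^S(conv(A))\subseteq conv(\varphi^S(A))$.

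Finally, for the reverse inclusion I would run the same computation backwards. An arbitrary point of $conv(\varphi^S(A))$ is $\frac{\sum_i s_i\varphi^S(a_i)}{\Vert\cdots\Vert}$ with $a_i\in A$ and $s_i>0$; setting $t_i=s_i/\Vert Ta_i\Vert>0$ --- which is legitimate precisely because $T$ is invertible, so $Ta_i\neq0$ --- the displayed identity shows this point equals $\varphi^S\big(\tfrac{\sum_i t_ia_i}{\Vert\cdots\Vert}\big)\in\varphi^S(conv(A))$. The two inclusions yield the claimed equality. I expect no serious obstacle: the entire content is that $\varphi^S$ is a projectivized linear isomorphism, so the only points requiring care are the bookkeeping of the positive reweighting $s_i=t_i\Vert Ta_i\Vert$ and the verification that every normalizing denominator is nonzero, both of which follow from invertibility of $T$ together with the open-hemisphere hypotheses $A\subset O(v)$ and $\varphi^S(A)\subset O(w)$.
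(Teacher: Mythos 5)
Your proof is correct and is essentially the same argument as the paper's: both exploit that $\varphi^S$ is a projectivized linear isomorphism (precomposition with $\varphi$ on $Hom(G,\R)\cong\R^n$) together with positive homogeneity of normalization, so a positive combination $\sum t_ia_i$ is carried to the positive combination with reweighted coefficients $t_i\Vert Ta_i\Vert$. The only cosmetic difference is that the paper gets the reverse inclusion by invoking $(\varphi^{-1})^S=(\varphi^S)^{-1}$ and applying the forward inclusion to $\varphi^{-1}$, whereas you invert the weights explicitly; both are valid.
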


\begin{proof}
Since ${(\varphi^{-1})}^S={(\varphi^S)}^{-1}$, it is enough to show that $\varphi^S(conv(A)) \subset conv(\varphi^S(A))$. Let $P \in conv(A)$ and write $P=\frac{t_1a_1+...+t_ma_m}{\Vert t_1a_1+...+t_ma_m \Vert}$ for some $a_i \in A$ and $t_i>0$. For each $a_i$, since $\mathfrak h:S(G) \to S^{n-1}$ is surjective we write $a_i=\mathfrak h[\chi_i]$ and by multiplying the representative $\chi_i$ by some $r>0$ if necessary we can actually suppose $a_i=\mathfrak h[\chi_i]=(\chi_i(x_1),...,\chi_i(x_n))$. Then, by definition, $\varphi^S(a_i)=\frac{1}{\lambda_i}(\chi_i \circ \varphi(x_1),...,\chi_i \circ \varphi(x_n))$,
where $\lambda_i=\Vert (\chi_i \circ \varphi(x_1),...,\chi_i \circ \varphi(x_n)) \Vert >0$. Now we compute $\varphi^S(P)$. It is easy to see that $\mathfrak h[t_1\chi_1+...+t_m\chi_m]=P$, since $a_i=\mathfrak h[\chi_i]$.
By denoting
\[
\lambda=\Vert (t_1(\chi_1 \circ \varphi)(x_1)+...+t_m(\chi_m \circ \varphi)(x_1),...,t_1(\chi_1 \circ \varphi)(x_n)+...+t_m(\chi_m \circ \varphi)(x_n)) \Vert,
\] we have

\begin{eqnarray*}
\varphi^S(P) 	&=& \frac{t_1}{\lambda}((\chi_1 \circ \varphi)(x_1),...,(\chi_1 \circ \varphi)(x_n))+...+\frac{t_m}{\lambda}((\chi_m \circ \varphi)(x_1),...,(\chi_m \circ \varphi)(x_n))\\
				&=& \frac{\lambda_1t_1}{\lambda}\varphi^S(a_1)+...+\frac{\lambda_mt_m}{\lambda}\varphi^S(a_m)\\
				&=& \frac{\frac{\lambda_1t_1}{\lambda}\varphi^S(a_1)+...+\frac{\lambda_mt_m}{\lambda}\varphi^S(a_m)}{\Vert \frac{\lambda_1t_1}{\lambda}\varphi^S(a_1)+...+\frac{\lambda_mt_m}{\lambda}\varphi^S(a_m) \Vert} \ \ \text{(since the above vector is already unitary)}\\
				&\in & conv(\varphi^S(A)),
\end{eqnarray*} as desired.
\end{proof}

Given an open hemisphere $O(v)=\{x \in S^n\ |\ \langle x,v \rangle > 0\}$ of $S^n$ for some $v \in S^n$, consider the affine $n$-space $v+\{v\}^\perp=\{v+w\ |\ \langle w,v \rangle =0\} \subset \R^{n+1}$. One can show that there is a homeomorphism $\theta_v:v+\{v\}^\perp \to O(v)\ \text{with}\ \theta_v(P)=\frac{P}{\Vert P \Vert}$, the inverse map given by $P \mapsto \frac{\Vert v \Vert^2}{\langle P,v \rangle}P$ (see next figure). From now on we identify $\R^n = v+\{v\}^\perp$.

\begin{figure}[htb!]
\centering
\includegraphics[scale=0.5]{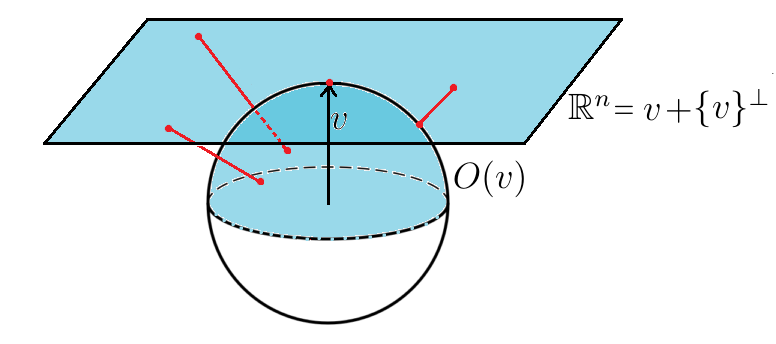}
\end{figure}

It is straightforward to show that $\theta_v:\R^n \to O(v)$ maps convex hulls of $\R^n$ to convex hulls of $O(v)$. Now we will define the convex polytopes in our context.

\begin{defi}[Euclidean convex polytopes]
A closed halfspace in $\R^d$ is a set of the form $H=\{x \in \R^d\ | \langle x,v \rangle \geq \beta\}$ for some $0 \neq v \in \R^d$ and $\beta \in \R$. A convex polytope $K$ in $\R^d$ is a finite intersection $K=\cap_{i=1}^n H_i$ of closed halfspaces $H_i$ which is also a bounded subset. Thinking of $K$ as a submanifold of $\R^d$ (with boundary), there is a well defined dimension $r=dim(K)$, so we say that $K$ is an $r$-polytope.
\end{defi}

We can always suppose that the family $\{H_i\}$ of closed halfspaces defining $K$ is irredundant, that is, is the minimal family necessary to define $K$.

\begin{defi}[Spherical convex polytopes]
For any $n \geq 0$, a closed hemisphere in $S^n$ is a set having the form $C(w)=\{p \in S^n\ |\ \langle p,w \rangle \geq 0\}$ for some $w \in S^n$. A convex polytope $K \subset S^n$ is a finite intersection of closed hemispheres in $S^n$. Given a finitely generated group $G$ with $S(G) \stackrel{\mathfrak h}{\simeq} S^{n-1}$, we say that $K \subset S(G)$ is a convex polytope if $\mathfrak h(K)$ is a convex polytope in $S^{n-1}$.
\end{defi}

The next lemma uses some known facts about Euclidean polytopes with which we will assume the reader is familiar.

\begin{lema}\label{minhaprop1}
Let $K \subset \R^d$ be a (Euclidean) $d$-polytope (maximal dimension) and $f:K \to K$ a homeomorphism. If $f$ maps segments to segments, that is, for any $P,Q \in K$, $f(conv(P,Q))=conv(f(P),f(Q))$, then $f$ maps vertices to vertices.
\end{lema}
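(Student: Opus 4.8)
The plan is to prove the contrapositive in spirit: I would show that a point $P \in K$ which is \emph{not} a vertex cannot map to a vertex under $f$, by characterizing vertices intrinsically in a way that segments preserve. The key observation is that vertices of a convex polytope $K$ are exactly its extreme points, where $v \in K$ is \emph{extreme} if $v$ cannot be written as $v = \operatorname{conv}(P,Q)(t) = (1-t)P + tQ$ for $t \in (0,1)$ with $P, Q \in K$ and $P \neq Q$; equivalently, $v$ is not in the relative interior of any nondegenerate segment contained in $K$. Since we are told $K$ is a $d$-polytope of maximal dimension, it is a genuine convex body and this extreme-point characterization of vertices is standard.

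First I would record that $f$ sends extreme points to extreme points. Suppose $v$ is extreme but $f(v)$ is not; then $f(v) = (1-t)A + tB$ for some $A \neq B$ in $K$ and $t \in (0,1)$, so $f(v)$ lies in the relative interior of the segment $\operatorname{conv}(A,B)$. Because $f$ is a homeomorphism it is a bijection, so $A = f(P)$ and $B = f(Q)$ for unique $P, Q \in K$ with $P \neq Q$. By hypothesis $f(\operatorname{conv}(P,Q)) = \operatorname{conv}(f(P),f(Q)) = \operatorname{conv}(A,B)$, and since $f(v)$ lies on this image segment, $v = f^{-1}(f(v))$ lies on $\operatorname{conv}(P,Q)$. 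The remaining point is that $f(v)$ is in the \emph{relative interior} of $\operatorname{conv}(A,B)$ (not an endpoint), and I must deduce that $v$ is correspondingly in the relative interior of $\operatorname{conv}(P,Q)$, contradicting extremality of $v$. This follows because $f$ restricted to the segment $\operatorname{conv}(P,Q)$ is a homeomorphism onto $\operatorname{conv}(A,B)$ that preserves the subsegment structure (the segment-to-segment hypothesis applied to subsegments forces endpoints to map to endpoints), so interior maps to interior.

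Next I would invoke the symmetric argument using $f^{-1}$. Since $f$ is a homeomorphism that maps segments to segments, its inverse $f^{-1}: K \to K$ does too: given $A, B \in K$, setting $P = f^{-1}(A), Q = f^{-1}(B)$, the hypothesis gives $f(\operatorname{conv}(P,Q)) = \operatorname{conv}(A,B)$, hence $f^{-1}(\operatorname{conv}(A,B)) = \operatorname{conv}(P,Q) = \operatorname{conv}(f^{-1}(A), f^{-1}(B))$. Thus both $f$ and $f^{-1}$ preserve extreme points, which means $f$ restricts to a bijection of the extreme-point set of $K$ onto itself. Since the vertices of $K$ are precisely its extreme points, $f$ maps vertices to vertices.

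The main obstacle I anticipate is the claim that $f$ maps the relative interior of a segment to the relative interior of the image segment, i.e.\ that endpoints go to endpoints. This is where I must use more than just ``maps segments to segments'': I would argue that if $P, Q$ are the endpoints of a segment and $v$ is a relative-interior point, then $v$ splits the segment as $\operatorname{conv}(P,v) \cup \operatorname{conv}(v,Q)$, and applying the hypothesis to each subsegment shows $f(v)$ splits $\operatorname{conv}(f(P),f(Q))$ into two nondegenerate subsegments (nondegenerate because $f$ is injective), forcing $f(v)$ into the relative interior. Equivalently, $f(P)$ and $f(Q)$ are the unique two points of $\operatorname{conv}(f(P),f(Q))$ whose preimages are the segment endpoints, and a homeomorphism of a closed interval onto a closed interval carries endpoints to endpoints. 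Making this endpoint-preservation step clean and airtight is the crux of the argument; once it is in place, the extreme-point bijection and the identification of extreme points with vertices are routine.
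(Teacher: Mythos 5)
Your proof is correct, and it takes a genuinely different route from the paper. The paper argues via the facet structure: it uses that $\partial K=F_1\cup\dots\cup F_n$, that a point of a $d$-polytope is a vertex iff it lies on at least $d$ facets, that a homeomorphism preserves $\partial K$, and then derives a counting contradiction ($f(\partial K)\subsetneq \partial K$) from the assumption that some vertex maps to a non-vertex. You instead identify vertices with extreme points and show directly that an injective, segment-preserving map sends extreme points to extreme points. Your route is shorter, avoids the combinatorics of facets entirely, and in fact does not use the maximal-dimension hypothesis or the topological boundary-preservation of homeomorphisms --- only bijectivity of $f$ and the segment hypothesis. One remark: the step you flag as the ``crux'' (endpoint preservation / relative interiors mapping to relative interiors) is not actually needed. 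If $f(v)=(1-t)A+tB$ with $t\in(0,1)$ and $A\ne B$, put $P=f^{-1}(A)$, $Q=f^{-1}(B)$; then $f(v)\in conv(A,B)=f(conv(P,Q))$ forces $v\in conv(P,Q)$ by injectivity, and $v\ne P$, $v\ne Q$ because $f(v)\ne A$ and $f(v)\ne B$. A point of a segment other than its two endpoints is automatically a proper convex combination of them, so $v$ is not extreme --- no sub-segment or interval-homeomorphism argument is required. With that simplification your proof closes cleanly; the $f^{-1}$ half is likewise fine (though only needed if you want the vertex set to be mapped \emph{onto} itself, which the statement does not strictly demand).
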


\begin{proof}
Let $K=\cap_{i=1}^n H_i$ for an irredundant family $\{H_i\}$ and let $F_i=K \cap H_i$ be its facets. It is known that $n \geq d+1$, that $\partial K=F_1 \cup ... \cup F_n$ and that a point of $K$ is a vertex if and only if it belongs to at least $d$ different facets. Since $f$ is a homeomorphism, it must map the boundary $\partial K$ to itself, and so $f(F_1 \cup ... \cup F_n)=F_1 \cup ... \cup F_n$. Suppose by contradiction that a vertex $P \in K$ is mapped to a non-vertex point $f(P) \in K$ (but obviously $P,f(P) \in \partial K$). If a point $Q \in K$ belongs to any facet of $K$ containing $P$ (say, $F$), then $conv(Q,P) \subset F$, since every facet is convex. Then $conv(f(Q),f(P)) \subset f(F) \subset \partial K$ by hypothesis, so the whole straight path joining $f(Q)$ and $f(P)$ is contained in the boundary $\partial K$. Then one can show that $f(Q)$ must be in a facet which also contains $f(P)$. This argument shows that all the facets containing $P$ must be mapped into the facets containing $f(P)$. But there are at least $d$ facets containing $P$, say, $F_1,...,F_d$ and at most $d-1$ facets containing $f(P)$, say, $F_{i_1},...,F_{i_{d-1}}$. Then
\[
f(F_1 \cup ... \cup F_d) \subset F_{i_1} \cup ... \cup F_{i_{d-1}}.
\] We continue: since there are at least $d+1$ facets, let $Z \in \partial K$ be a point outside $F_{i_1} \cup ... \cup F_{i_{d-1}}$, say, $Z \in F_{i_d}$, and we can suppose $F_{i_d}$ is the only facet containing $Z$. Since $f$ is surjective, $Z=f(W)$, so $W$ must be a boundary point outside $F_1 \cup ... \cup F_d$, say, $W \in F_{d+1}$. By the same argument above, we must have $f(F_{d+1}) \subset F_{i_d}$ and so $f(F_1 \cup ... \cup F_{d+1}) \subset F_{i_1} \cup ... \cup F_{i_d}$. If $d+1=n$, we stop. If not, we follow these same steps. After a finite number of steps we will have
\[
f(F_1 \cup ... \cup F_n) \subset F_{i_1} \cup ... \cup F_{i_{n-1}},
\]so $f(\partial K) \subsetneq \partial K$, contradiction.
\end{proof}

\begin{theorem}
Let $G$ be a finitely generated group and $K \subset S(G)$ a convex polytope contained in an open hemisphere of $S(G)$. Then $K$ is invariant in $S(G)$ if and only if $V(K)$ is invariant in $S(G)$.
\end{theorem}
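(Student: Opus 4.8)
The plan is to prove both implications by transporting $K$ to a Euclidean model through the charts $\mathfrak h$ and $\theta_v$ and then invoking Lemmas \ref{meulema4} and \ref{minhaprop1}. Fix $v \in S^{n-1}$ with $\mathfrak h(K) \subset O(v)$, and recall that $\theta_v$ carries Euclidean convex hulls to spherical ones and conversely. Put $K'=\theta_v^{-1}(\mathfrak h(K))$. Since $\mathfrak h(K)$ is a finite intersection of closed hemispheres lying inside the open hemisphere $O(v)$, the set $K'$ is a bounded finite intersection of closed halfspaces, hence a Euclidean convex polytope, and $\theta_v$ identifies the vertex set $V(K')$ of $K'$ with $\mathfrak h(V(K))$. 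By the standard fact that a polytope is the convex hull of its vertices, $K'=conv(V(K'))$, and therefore $\mathfrak h(K)=conv(\mathfrak h(V(K)))$.

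Suppose first that $V(K)$ is invariant, and fix $\varphi \in Aut(G)$. Then $\varphi^S(\mathfrak h(V(K)))=\mathfrak h(V(K)) \subset O(v)$, so Lemma \ref{meulema4}, applied with the finite set $\mathfrak h(V(K))$ and $w=v$, gives
\[
\varphi^S(\mathfrak h(K))=\varphi^S(conv(\mathfrak h(V(K))))=conv(\varphi^S(\mathfrak h(V(K))))=conv(\mathfrak h(V(K)))=\mathfrak h(K).
\]
Hence $\varphi^S$ fixes $\mathfrak h(K)$ for every $\varphi$, i.e. $K$ is invariant. This direction is routine once $K=conv(V(K))$ is in hand.

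For the converse, assume $K$ is invariant and fix $\varphi \in Aut(G)$, so that $\varphi^S$ restricts to a self-homeomorphism of $\mathfrak h(K)$; since $\varphi^S(\mathfrak h(K))=\mathfrak h(K) \subset O(v)$, Lemma \ref{meulema4} applies with $w=v$. Transporting by $\theta_v$, the map $f=\theta_v^{-1} \circ \varphi^S \circ \theta_v : K' \to K'$ is a homeomorphism that sends each segment $conv(P,Q)$ to $conv(f(P),f(Q))$, because each of $\theta_v$, $\varphi^S$ and $\theta_v^{-1}$ sends convex hulls to convex hulls. The obstacle is that $K'$ need not be full-dimensional in $\R^{n-1}$, so Lemma \ref{minhaprop1} does not apply on the nose. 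I resolve this by passing to the affine hull $A$ of $K'$: if $r=dim(K)$, then $A \cong \R^r$ and $K' \subset A$ is an $r$-polytope of maximal dimension, while $f$ is a self-homeomorphism of $K'$ whose relevant segments all lie in $A$. Lemma \ref{minhaprop1} now applies verbatim with $d=r$ and shows that $f$ maps $V(K')$ into $V(K')$; as $V(K')$ is finite and $f$ is a bijection of $K'$, it permutes $V(K')$. Translating back through $\theta_v$ yields $\varphi^S(\mathfrak h(V(K)))=\mathfrak h(V(K))$, so $V(K)$ is invariant.

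The main difficulty is precisely this dimension mismatch: Lemma \ref{minhaprop1} is stated only for polytopes of maximal dimension, and the crux of the argument is to notice that, once we are in the Euclidean chart, we may replace the ambient $\R^{n-1}$ by the affine hull of $K'$, inside which $K'$ is full-dimensional, without disturbing the ``segments to segments'' property. The degenerate case $dim(K)=0$, where $K=V(K)$ is a single point, is trivial in both directions.
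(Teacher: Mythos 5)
Your proposal is correct and follows essentially the same route as the paper: both directions transport $\mathfrak h(K)$ to the Euclidean chart via $\theta_v$, use $K=conv(V(K))$ together with Lemma \ref{meulema4} for the easy implication, and handle the converse by restricting to an $r$-dimensional affine copy in which $K'$ is full-dimensional before applying Lemma \ref{minhaprop1} (the paper's isometry $T:\R^r\to E^r$ is exactly your passage to the affine hull).
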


\begin{proof}
The convex polytope $\mathfrak h(K)$ is contained in some open hemisphere $O(v)$ of $S^{n-1}$. Let $\theta_v:\R^{n-1}\to O(v)$ be the homeomorphism previously defined. One can verify from the definition of $\theta_v$ that the preimage of a closed hemisphere in $S^{n-1}$ under $\theta_v$ is a closed halfspace in $\R^{n-1}$. Then to see that the preimage $K'={\theta_v}^{-1}(\mathfrak h(K))$ is a polytope it suffices to see that it is bounded. Since $\mathfrak h(K)$ is closed in the compact $S^{n-1}$, it is compact. Since $\theta_v$ is a homeomorphism, $K'$ is also compact in $\R^{n-1}$ and therefore bounded, so it is in fact a $r$-polytope for some $0 \leq r \leq n-1$.

To show the theorem, let $\varphi \in Aut(G)$. It is enough to show that $\mathfrak h(K)$ is invariant under $\varphi^S$ if and only if $V(\mathfrak h(K))$ is. Suppose first that $V(\mathfrak h(K))$ is invariant under $\varphi^S$. In Euclidean space, every convex polytope is the convex hull of its vertices. Since $\theta_v$ maps convex hulls to spherical convex hulls, it follows that $\mathfrak h(K)$ is also the convex hull of its vertices. Using Lemma \ref{meulema4}, we have
\[
\varphi^S(\mathfrak h(K))=\varphi^S(conv(V(\mathfrak h(K))))=conv(\varphi^S(V(\mathfrak h(K))))=conv(V(\mathfrak h(K)))=\mathfrak h(K),
\]as desired. Now, suppose $\varphi^S(\mathfrak h(K))=\mathfrak h(K)$. If $r<n-1$, then $K'$ is contained in a proper $r$-hyperspace of $\R^{n-1}$, say, $E^r$. There is a linear isomorphism and isometry $T:\R^r \to E^r$ and a $r$-polytope $\tilde{K}\subset \R^r$ such that $K'=T(\tilde{K})$. Consider the composition of homeomorphisms
\[
\tilde{K} \stackrel{T}{\longrightarrow} K' \stackrel{\theta_v}{\longrightarrow} \mathfrak h(K) \stackrel{\varphi^S}{\longrightarrow} \mathfrak h(K) \stackrel{{\theta_v}^{-1}}{\longrightarrow} K' \stackrel{T^{-1}}{\longrightarrow} \tilde{K}.
\]Since $T$ maps straight paths to straight paths, $\theta_v$ maps straight paths to geodesic paths and $\varphi^S$ maps geodesic paths to geodesic paths, this composition is a homeomorphism which maps straight paths to straight paths. Since $\tilde{K}$ has maximal dimension in $\R^r$, by Lemma \ref{minhaprop1} this composition must map the vertices of $\tilde{K}$ to themselves. Since the vertices of $\mathfrak h(K)$ are the image of the ones from $K'$, it follows that $\varphi^S$ must map the vertices of $\mathfrak h(K)$ to themselves, as desired. If $K'$ already had maximal dimension $r=n-1$, the proof is the same, but we don't even need to use $\tilde{K}$ and $T$.
\end{proof}

\begin{theorem}\label{meuteopolytope}
Let $G$ be a finitely generated group. If there is a convex polytope $K \subset S(G)$ contained in an open hemisphere of $S(G)$ and with rational vertices such that it is invariant under all homeomorphisms induced by automorphisms of $G$, then $G$ has property $R_\infty$. In particular, if $\Sigma^1(G)^c$ is one such polytope, then $G$ has property $R_\infty$.
\end{theorem}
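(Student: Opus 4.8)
The plan is to reduce the statement to Theorem \ref{teodacidessgeral} by replacing the polytope $K$ with its vertex set $V(K)$. All of the geometric difficulty has already been absorbed into the two preceding results, so the only task is to verify that $V(K)$ fulfills the four hypotheses of Theorem \ref{teodacidessgeral}: it should be nonempty, finite, consist of rational points, and lie in an open hemisphere of $S(G)$, and it should be invariant in $S(G)$.

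First I would record the elementary polytope facts. A nonempty convex polytope has a nonempty and finite set of vertices, so $V(K)$ is a nonempty finite subset of $S(G)$. Since $V(K) \subset K$ and $K$ is contained in an open hemisphere of $S(G)$ by hypothesis, the same open hemisphere contains $V(K)$. Finally, the rationality of every point of $V(K)$ is exactly the assumption that $K$ has rational vertices. This disposes of three of the four conditions without any real computation.

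The invariance condition is where I would invoke the theorem immediately preceding this one, which asserts that for a convex polytope $K$ contained in an open hemisphere, $K$ is invariant in $S(G)$ if and only if $V(K)$ is invariant in $S(G)$. Since $K$ is invariant by assumption, the forward implication gives that $V(K)$ is invariant in $S(G)$, i.e.\ invariant under every homeomorphism $\varphi^S$ induced by an automorphism of $G$. With all four hypotheses verified, Theorem \ref{teodacidessgeral} applies to $A = V(K)$ and yields property $R_\infty$ for $G$.

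For the ``in particular'' clause, I would recall that for any finitely generated group $G$ the complement $\Sigma^1(G)^c$ is invariant under the action of $Aut(G)$ on $S(G)$ (this is precisely the invariance of $\Sigma^1(G)^c$ noted in the discussion preceding Theorem \ref{teodacidessgeral}). Hence, whenever $\Sigma^1(G)^c$ happens to be a convex polytope contained in an open hemisphere of $S(G)$ with rational vertices, it meets all the hypotheses of the first part with $K = \Sigma^1(G)^c$, and property $R_\infty$ follows immediately. I do not anticipate any genuine obstacle here: the only points demanding care are the standard facts that $V(K)$ is nonempty and finite, and that it inherits invariance from $K$, the latter being exactly the content of the preceding theorem.
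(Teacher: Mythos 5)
Your proposal is correct and follows essentially the same route as the paper: the paper's proof likewise passes from $K$ to its vertex set $V(K)$ via the immediately preceding theorem to obtain a finite, invariant, rational set in an open hemisphere, and then applies Theorem \ref{teodacidessgeral}. Your additional remark that $\Sigma^1(G)^c$ is $Aut(G)$-invariant is exactly the (implicit) justification the paper relies on for the ``in particular'' clause.
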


\begin{proof}
By the previous theorem, $V(K) \subset S(G)$ is finite, invariant and by definition contained in an open half-space of $S(G)$. Then the result follows directly from Theorem \ref{teodacidessgeral}.
\end{proof}

\section{Property $R_\infty$ for $\Gamma_n$, its finite index subgroups, and direct products}

In this section we use all the information previously gathered to guarantee property $R_\infty$ for $\Gamma_n$ (Corollary \ref{meurinftygamman}), its finite index subgroups $H$ (Corollary \ref{rinftyforh}) and also for any (finite) direct product involving these groups (Corollary \ref{meurinfproddir}). Note that property $R_\infty$ is already known for $\Gamma_n$ and its finite index subgroups (see \cite{TabackWongGamman}). However, by using sigma theory, we obtain the same results with new and easier proofs. Corollary \ref{meurinfproddir} for the direct product was not considered in \cite{TabackWongGamman}. In Proposition \ref{example}, we exhibit a group $G$ where Theorem \ref{meuteopolytope} can be used to guarantee property $R_\infty$ without the need of completely computing the $\Sigma^1$ invariant.

We will make use of the following theorem.

\begin{theorem}[\cite{DaciDess}, Theorem $3.3$]\label{dacidess}
Let $G$ be a finitely generated group such that
\[
\Sigma^1(G)^c=\{[\chi_1],...,[\chi_m]\}
\]is a (nonempty) finite set of rational points. If $\{[\chi_1],...,[\chi_m]\}$ is contained in an open hemisphere of $S(G)$, then $G$ has property $R_\infty$.
\end{theorem}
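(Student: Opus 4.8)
The plan is to obtain this statement as an immediate consequence of Theorem \ref{teodacidessgeral}, applied to the subset $A=\Sigma^1(G)^c$. By hypothesis, $A=\{[\chi_1],\dots,[\chi_m]\}$ is nonempty and finite, consists of rational points, and is contained in an open hemisphere of $S(G)$. Thus the only hypothesis of Theorem \ref{teodacidessgeral} that still requires verification is that $A$ is invariant in $S(G)$, that is, that $\varphi^*(A)=A$ for every $\varphi\in Aut(G)$.

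To establish this, I would prove the standard fact that $\Sigma^1(G)$ itself is invariant under the maps $\varphi^*$. Fix a finite generating set $X$ of $G$ and let $\varphi\in Aut(G)$; then $\varphi(X)$ is again a finite generating set, and $g\mapsto\varphi(g)$ defines an isomorphism between the Cayley graphs $\Gamma(G,X)$ and $\Gamma(G,\varphi(X))$. For a character $\chi$ and a vertex $g$, one has $(\chi\circ\varphi)(g)\ge 0$ if and only if $\chi(\varphi(g))\ge 0$, so this graph isomorphism carries the positive subgraph of $\Gamma(G,X)$ determined by the character $\chi\circ\varphi$ onto the positive subgraph of $\Gamma(G,\varphi(X))$ determined by $\chi$; in particular, one is connected exactly when the other is. Since $\Sigma^1(G)$ does not depend on the chosen finite generating set, it follows that $[\chi\circ\varphi]\in\Sigma^1(G)$ if and only if $[\chi]\in\Sigma^1(G)$, i.e. $\varphi^*(\Sigma^1(G))=\Sigma^1(G)$. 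Passing to complements gives $\varphi^*(\Sigma^1(G)^c)=\Sigma^1(G)^c$ for every $\varphi\in Aut(G)$, which is precisely the invariance of $A=\Sigma^1(G)^c$ in $S(G)$.

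With all hypotheses of Theorem \ref{teodacidessgeral} verified for $A=\Sigma^1(G)^c$, that theorem yields property $R_\infty$ for $G$, completing the argument. The only nontrivial point is the invariance of $\Sigma^1(G)^c$ under the induced homeomorphisms $\varphi^*$, and this is exactly the property that Theorem \ref{teodacidessgeral} was abstracted to exploit; as the remark preceding that theorem already isolates ``invariance in $S(G)$'' as the sole feature of $\Sigma^1(G)^c$ used in the original proof, no further computation is required here.
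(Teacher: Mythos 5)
Your proposal is correct: the paper states Theorem \ref{dacidess} as a quoted result from \cite{DaciDess} without proof, and deducing it as the special case $A=\Sigma^1(G)^c$ of Theorem \ref{teodacidessgeral} is exactly the relationship the paper intends, since the remark preceding Theorem \ref{teodacidessgeral} identifies invariance in $S(G)$ as the only property of $\Sigma^1(G)^c$ used. Your verification of that invariance, via the Cayley-graph isomorphism $\Gamma(G,X)\to\Gamma(G,\varphi(X))$ and the independence of $\Sigma^1$ from the generating set, is the standard argument and is sound.
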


\begin{cor}\label{meurinftygamman}
The generalized solvable Baumslag-Solitar groups $\Gamma_n$ have property $R_\infty$.
\end{cor}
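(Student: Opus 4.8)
The plan is to apply Theorem \ref{dacidess} directly, since Corollary \ref{meurinftygamman} is an immediate consequence once the hypotheses are verified. By Theorem \ref{meusigmagamman}, we already know that $\Sigma^1(\Gamma_n)^c=\{[\chi_1],...,[\chi_r]\}$ is a nonempty finite set. So the two things I need to check are that this set consists of \emph{rational} points in $S(\Gamma_n)$ and that it lies in an \emph{open hemisphere}.

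For the rationality, I would use the explicit description of the characters $\chi_i$. Under the identification $\mathfrak h:S(\Gamma_n)\simeq S^{r-1}$ given by $[\chi]\mapsto (\chi(t_1),...,\chi(t_r))/\Vert(\chi(t_1),...,\chi(t_r))\Vert$, each $[\chi_i]$ is sent to the standard basis vector $e_i$, since $\chi_i(t_i)=1$ and $\chi_i(t_j)=0$ for $j\neq i$. These are manifestly rational (indeed integral) points, so this step is immediate. For the open hemisphere condition, I note that Theorem \ref{meusigmagamman} already states in its last sentence that $\Sigma^1(\Gamma(S))^c$ lies inside an open hemisphere of $S(\Gamma(S))$, and $\Gamma_n$ is a special case of $\Gamma(S)$. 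Concretely, taking $v=(1,...,1)/\sqrt{r}\in S^{r-1}$, every point $e_i$ satisfies $\langle e_i,v\rangle=1/\sqrt{r}>0$, so the whole set sits in the open hemisphere $O(v)$.

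Having verified that $\Sigma^1(\Gamma_n)^c$ is a nonempty finite set of rational points contained in an open hemisphere, the hypotheses of Theorem \ref{dacidess} are met, and the conclusion that $\Gamma_n$ has property $R_\infty$ follows at once.

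I do not anticipate a serious obstacle here: the work has already been done in Theorem \ref{meusigmagamman}, and this corollary is essentially a packaging of that computation into the criterion of Theorem \ref{dacidess}. The only point requiring a moment's care is confirming that the character sphere $S(\Gamma_n)$ is genuinely $S^{r-1}$ (i.e.\ that the free rank of $\Gamma_n^{ab}$ equals $r$), which follows from the splitting $\Gamma_n=\mathbb{Z}[1/n]\rtimes\mathbb{Z}^r$ together with the fact that $a$ (generating $\mathbb{Z}[1/n]$) becomes torsion in the abelianization, so that the torsion-free part of $\Gamma_n^{ab}$ is exactly $\mathbb{Z}^r$ with free generators $t_1,...,t_r$.
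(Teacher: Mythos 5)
Your proposal is correct and follows the paper's own proof exactly: both verify that $\Sigma^1(\Gamma_n)^c$ is a nonempty finite set of rational points lying in the open hemisphere determined by $(1,\dots,1)/\Vert(1,\dots,1)\Vert$, and then invoke Theorem \ref{dacidess}. Your extra remarks on why the points are rational and why $S(\Gamma_n)\simeq S^{r-1}$ are just slightly more explicit versions of what the paper leaves implicit.
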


\begin{proof}
Observe that, by Theorem \ref{meusigmagamman}, $\Sigma^1(\Gamma_n)^c$ is a finite set of rational points and is contained in the open hemisphere $O\left( \frac{(1,1,...,1)}{\Vert(1,1,...,1)\Vert} \right)$. The result follows from Theorem \ref{dacidess}.
\end{proof}

\begin{cor}\label{rinftyforh}
All finite index subgroups of $\Gamma_n$ have property $R_\infty$.
\end{cor}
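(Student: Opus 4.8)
The plan is to deduce this directly from the explicit computation of $\Sigma^1(H)^c$ in Theorem \ref{meusigmah}, combined with the criterion of Theorem \ref{dacidess}, exactly mirroring the argument used for $\Gamma_n$ itself in Corollary \ref{meurinftygamman}.

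First I would fix an arbitrary finite index subgroup $H\leq\Gamma_n$ and, via Theorem \ref{fisgamman}, put it in the normal form $(*)$ so that Theorem \ref{meusigmah} applies. Under the identification $S(H)\simeq S^{r-1}$ used there, this gives
\[
\Sigma^1(H)^c=\Bigg\{\frac{(k_{11},0,\dots,0)}{\Vert(k_{11},0,\dots,0)\Vert},\frac{(k_{12},k_{22},0,\dots,0)}{\Vert(k_{12},k_{22},0,\dots,0)\Vert},\dots,\frac{(k_{1r},k_{2r},\dots,k_{rr})}{\Vert(k_{1r},\dots,k_{rr})\Vert}\Bigg\}.
\]
Since the $k_{ij}$ are integers, each of these $r$ points is rational; the set is finite and it is nonempty because $r\geq 1$. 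Hence the only hypothesis of Theorem \ref{dacidess} that still requires verification is that these points all lie in a single open hemisphere of $S(H)$.

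The crucial structural input I would use is that in the normal form of Theorem \ref{fisgamman} one has $k_{ii}>0$ and $0\leq k_{ji}<k_{ii}$ for $j<i$, so every coordinate vector $(k_{1i},\dots,k_{ii},0,\dots,0)$ has nonnegative entries with strictly positive $i$-th entry $k_{ii}$. Taking $v=\frac{(1,1,\dots,1)}{\Vert(1,\dots,1)\Vert}$, I would then simply observe that
\[
\langle(k_{1i},\dots,k_{ii},0,\dots,0),(1,\dots,1)\rangle=k_{1i}+\dots+k_{ii}\geq k_{ii}>0
\]
for each $i$, so that $\Sigma^1(H)^c\subset O(v)$. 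Theorem \ref{dacidess} then yields property $R_\infty$ for $H$.

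There is no genuine obstacle here once Theorem \ref{meusigmah} is in hand: the proof is a transcription of Corollary \ref{meurinftygamman}. The only point I would emphasize is that the \emph{same} hemisphere $O\big(\tfrac{(1,\dots,1)}{\Vert(1,\dots,1)\Vert}\big)$ works uniformly across all finite index subgroups, and this is forced precisely by the nonnegativity of the $k_{ij}$ together with the positivity of the diagonal entries $k_{ii}$ guaranteed by the normal form of Theorem \ref{fisgamman}; this is what confines $\Sigma^1(H)^c$ to the positive orthant independently of the choice of $H$.
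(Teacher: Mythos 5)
Your proposal is correct and follows essentially the same route as the paper: invoke Theorem \ref{meusigmah} to see that $\Sigma^1(H)^c$ is a finite set of rational points lying in the open hemisphere $O\bigl(\tfrac{(1,\dots,1)}{\Vert(1,\dots,1)\Vert}\bigr)$, then apply Theorem \ref{dacidess}. You merely spell out the positivity check $k_{1i}+\dots+k_{ii}\geq k_{ii}>0$ that the paper leaves implicit.
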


\begin{proof}
Let $H$ be such finite index subgroup. As above, just observe that, by Theorem \ref{meusigmah}, $\Sigma^1(H)^c$ is a finite set of rational points and is contained in the open hemisphere $O\left( \frac{(1,1,...,1)}{\Vert(1,1,...,1)\Vert} \right)$ of $S(H)$. The result follows from Theorem \ref{dacidess}.
\end{proof}

Now we show property $R_\infty$ for any (finite) direct product between the groups $\Gamma_n$ and its finite index subgroups.

\begin{cor}\label{meurinfproddir}
Let $G=G_1 \times ... \times G_m$, where each $G_i$ is some $\Gamma_n$ or some finite index subgroup $H$ of $\Gamma_n$. Then $G$ has $R_\infty$ property.
\end{cor}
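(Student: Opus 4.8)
The plan is to exhibit a single rational spherical convex polytope $K \subset S(G)$ that lies in an open hemisphere and is invariant under every homeomorphism $\varphi^S$ induced by an automorphism of $G$; Theorem \ref{meuteopolytope} then yields property $R_\infty$ at once. The essential input is a product formula for the BNS invariant. Writing $S(A \times B) = S(A) * S(B)$ for the spherical join, I would first record that for finitely generated groups $A$ and $B$
\[
\Sigma^1(A \times B)^c = \Sigma^1(A)^c * \Sigma^1(B)^c .
\]
Concretely, a character $\chi$ of $A \times B$ whose restrictions $\chi_A,\chi_B$ are both nonzero lies in $\Sigma^1(A \times B)$ exactly when $[\chi_A] \in \Sigma^1(A)$ or $[\chi_B] \in \Sigma^1(B)$, while the characters supported on a single factor are controlled by that factor's invariant through Proposition B1.11 of \cite{Strebel}. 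Both implications follow directly from the connectivity definition of $\Sigma^1$: connectivity of the relevant subgraph in one factor, together with the commuting directions of the other factor, forces connectivity in the product. I would isolate this as a preliminary lemma.

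Granting the two-factor formula, associativity of the join and induction on $m$ give
\[
\Sigma^1(G)^c = \Sigma^1(G_1)^c * \cdots * \Sigma^1(G_m)^c ,
\]
which is the union of the finitely many spherical simplices $conv\{p_1,\dots,p_m\}$ as each $p_i$ ranges over the finite set $\Sigma^1(G_i)^c$. In particular this complex has vertex set $V = \bigcup_{i=1}^m \Sigma^1(G_i)^c$, viewed inside $S(G)$ through the orthogonal coordinate blocks coming from $G^{ab} = G_1^{ab} \times \cdots \times G_m^{ab}$.

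Next I would check that $V$ is in convex position. Let $r_i$ be the free rank of $G_i^{ab}$ and $N = r_1 + \cdots + r_m$ the free rank of $G^{ab}$, so $S(G) \simeq S^{N-1}$. By Theorems \ref{meusigmagamman} and \ref{meusigmah}, the $r_i$ points of $\Sigma^1(G_i)^c$ are the normalized standard basis vectors (when $G_i \simeq \Gamma_n$) or the normalized columns of an upper triangular matrix with positive diagonal (when $G_i$ is a finite index subgroup $H$); in either case they are $r_i$ linearly independent rational unit vectors. Since distinct blocks occupy disjoint coordinates, the full set $V$ consists of exactly $N$ linearly independent rational unit vectors, and by Corollaries \ref{meurinftygamman} and \ref{rinftyforh} each of them lies in the open hemisphere $O(v)$ with $v = (1,\dots,1)/\Vert (1,\dots,1)\Vert$. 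Hence $K = conv(V)$ is a rational spherical $(N-1)$-simplex contained in $O(v)$, and since $V$ is precisely the vertex set of the join above we have $K = conv(\Sigma^1(G)^c)$.

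Finally, $\Sigma^1(G)^c$ is invariant under every $\varphi^S$ because $\Sigma^1$ is an automorphism invariant \cite{BNS}; as $\Sigma^1(G)^c \subset O(v)$ is invariant, Lemma \ref{meulema4} gives
\[
\varphi^S(K) = \varphi^S\big(conv(\Sigma^1(G)^c)\big) = conv\big(\varphi^S(\Sigma^1(G)^c)\big) = conv(\Sigma^1(G)^c) = K ,
\]
so $K$ is an invariant rational convex polytope lying in an open hemisphere, and Theorem \ref{meuteopolytope} yields property $R_\infty$ for $G$. The main obstacle is the product formula for $\Sigma^1$: everything after it is bookkeeping with the join and the explicit coordinates supplied by Theorems \ref{meusigmagamman} and \ref{meusigmah}. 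By the associativity of the join this reduces to the two-factor case, whose proof is a direct, if slightly technical, manipulation of the Cayley-graph definition of $\Sigma^1$.
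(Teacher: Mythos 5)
Your key input---the product formula $\Sigma^1(A\times B)^c=\Sigma^1(A)^c * \Sigma^1(B)^c$---is false, and the justification you sketch for it is the false part. A character $\chi$ of $A\times B$ whose restrictions $\chi_A,\chi_B$ are both nonzero lies in $\Sigma^1(A\times B)$ \emph{always}, not merely when one of $[\chi_A],[\chi_B]$ lies in the corresponding $\Sigma^1$. The correct statement (Proposition A2.7 of \cite{Strebel}, which is what the paper's proof invokes) is that $\Sigma^1(G_1\times G_2)^c$ is the \emph{union} $\Sigma^1(G_1)^c\cup\Sigma^1(G_2)^c$ of the complements sitting inside the subspheres of characters vanishing on the other factor, not their spherical join. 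A quick sanity check: for $F_2\times F_2$ the join formula would give $\Sigma^1(F_2\times F_2)^c=S^1*S^1=S^3$, i.e.\ empty $\Sigma^1$, whereas the Bieri--Stallings kernel of the diagonal character $F_2\times F_2\to\Z$ is finitely generated, so that character and its negative both lie in $\Sigma^1$. Also, Proposition B1.11 of \cite{Strebel} concerns finite-index subgroups and does not apply to a direct factor (which has infinite index), so it cannot be the tool that controls the characters supported on a single factor; that case is part of the product formula itself.

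That said, the error happens not to be fatal here, because everything downstream of the formula uses only $conv(\Sigma^1(G)^c)$ together with its invariance, and the convex hull of the join coincides with the convex hull of the union $V=\bigcup_i\Sigma^1(G_i)^c$. So your polytope $K=conv(V)$, its rational vertices, its containment in $O(v)$ with $v=(1,\dots,1)/\Vert(1,\dots,1)\Vert$ (all coordinates of the points in $V$ are $\geq 0$ with at least one positive, by Theorems \ref{meusigmagamman} and \ref{meusigmah}), and its invariance via Lemma \ref{meulema4} all survive once the formula is corrected, and Theorem \ref{meuteopolytope} then applies. But with the correct union formula the whole polytope detour is unnecessary: $\Sigma^1(G)^c=\bigcup_i\Sigma^1(G_i)^c$ is already a finite, automorphism-invariant set of rational points contained in an open hemisphere of $S(G)$, so Theorem \ref{dacidess} yields property $R_\infty$ directly. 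That one-step reduction is exactly the paper's proof.
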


\begin{proof}
By Theorems \ref{meusigmagamman}, \ref{meusigmah} and by the known formula for the $\Sigma^1$ invariant of a direct product of groups (Proposition $A2.7$ of \cite{Strebel}, for example), we easily see that $\Sigma^1(G)^c$ is a finite set of rational points of $S(G)$. Furthermore, by Theorems \ref{meusigmagamman} and \ref{meusigmah}, we know that $\Sigma^1(G_i)^c$ is contained in an open hemisphere $O(v_i)$ of $S(G_i)$, for every $i$. From that, it is easy to see that $\Sigma^1(G)^c$ is contained in the open hemisphere $O(v_1,...,v_m)$ of $S(G)$. The result follows from Theorem \ref{dacidess}.
\end{proof}

Let $G$ be a finitely generated group and $X$ a finite set of generators for $G$. A path in the Cayley graph $\Gamma=\Gamma(G,X)$ of $G$ is denoted by $p=(g,y_1...y_n)$. The path $p$ starts at $g$, walks through the edge $(g,y_1)$ until the vertex $gy_1$, walks through $(gy_1,y_2)$ until $gy_1y_2$ and so on, until its terminus $gy_1...y_n$. Given $\chi \in Hom(G,\mathbb{R})$, the evaluation function $\nu_\chi$ is given by
\[
\nu_\chi(p)=\min\{\chi(g),\chi(gy_1),...,\chi(gy_1...y_n)\}.
\]
We are going to use the following geometric $\Sigma^1$-criterion given by R. Strebel (Theorem $A3.1$) in \cite{Strebel} in Proposition \ref{example} to illustrate a situation where we can use Theorem \ref{meuteopolytope} to guarantee property $R_\infty$ for a finitely generated group $G$ without having to completely compute $\Sigma^1(G)$. 

\begin{theorem}[Geometric Criterion for $\Sigma^1$]\label{geometriccrit}
Let $G$ be a finitely generated group with finite generating set $X$ and denote $Y=X^\pm$. Let $[\chi] \in S(G)$ and choose $t \in Y$ such that $\chi(t)>0$. Then the following are equivalent:
\begin{itemize}
\item[1)]$\Gamma_\chi$ is connected (or $[\chi] \in \Sigma^1(G)$);\\
\item[2)]For every $y \in Y$, there exists a path $p_y$ from $t$ to $yt$ in $\Gamma$ such that $\nu_\chi(p_y)>\nu_\chi((1,y))$.
\end{itemize}
\end{theorem}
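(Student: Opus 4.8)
The plan is to lean on the single structural fact that left multiplication by any $g \in G$ is an automorphism of the Cayley graph $\Gamma$ which shifts the evaluation function by a constant: for any path $p$ from $u$ to $v$, the translate $g\cdot p$ runs from $gu$ to $gv$ and satisfies $\nu_\chi(g\cdot p) = \chi(g) + \nu_\chi(p)$. This translation-invariance is the engine for both implications. Note in particular that $\nu_\chi((1,y)) = \min\{0,\chi(y)\}$ since $\chi(1)=0$.

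For $1)\Rightarrow 2)$, I would first isolate a translation lemma: if $\Gamma_\chi$ is connected, then any two vertices $u,v$ can be joined by a path $q$ with $\nu_\chi(q) \ge \min\{\chi(u),\chi(v)\}$. To prove it, left-translate by $u^{-1}$ or $v^{-1}$ (whichever realizes the minimum) so that both endpoints land in $\{g : \chi(g) \ge 0\}$; connectedness of $\Gamma_\chi$ supplies a path there with $\nu_\chi \ge 0$, and translating back raises its floor to $\min\{\chi(u),\chi(v)\}$. Applying this to $u=t$ and $v=yt$ yields a path $p_y$ from $t$ to $yt$ with $\nu_\chi(p_y) \ge \min\{\chi(t),\chi(yt)\} = \chi(t) + \min\{0,\chi(y)\}$. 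Since $\chi(t)>0$, this is strictly larger than $\min\{0,\chi(y)\} = \nu_\chi((1,y))$, which is exactly condition $2)$; the required strictness comes for free from $\chi(t)>0$.

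For $2)\Rightarrow 1)$, the idea is to promote the local detours $p_y$ into a global repair operation. Given a path $P=(s,\, y_1\cdots y_n)$, replace each traversed edge by the corresponding left-translate of $p_{y_k}$; concatenating these yields a path $\mathrm{LIFT}(P)$ from $st$ to $et$, where $e$ is the terminus of $P$. Using the translated form of $2)$ on each edge, I would verify that $\nu_\chi(\mathrm{LIFT}(P)) \ge \nu_\chi(P) + \varepsilon$, where $\varepsilon = \min_{y\in Y}\big(\nu_\chi(p_y) - \min\{0,\chi(y)\}\big) > 0$ is positive precisely because $Y$ is finite. Now fix any vertex $v$ with $\chi(v)\ge 0$, pick any path $P_0$ from $1$ to $v$, and iterate: $P_k = \mathrm{LIFT}(P_{k-1})$ runs from $t^k$ to $vt^k$ with $\nu_\chi(P_k) \ge \nu_\chi(P_0) + k\varepsilon$. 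For $k$ large this floor becomes nonnegative, so $P_k$ lies entirely in $\Gamma_\chi$ and connects $t^k$ to $vt^k$ there. Finally I would close up with two combs of $t$-edges: the chain $1, t, t^2, \dots, t^k$ lies in $\Gamma_\chi$ (each $\chi(t^j)\ge 0$), and the chain $v, vt, \dots, vt^k$ lies in $\Gamma_\chi$ as well (each $\chi(vt^j) = \chi(v)+j\chi(t)\ge 0$); splicing $v \sim vt^k \sim t^k \sim 1$ shows every such $v$ lies in the component of $1$, so $\Gamma_\chi$ is connected.

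The main obstacle is the interplay between strictness and the mismatch of endpoints. The detours $p_y$ only connect the $t$-shifted vertices $gt, gyt$ rather than $g, gy$ themselves, so a single application of the repair operation lands at $vt^k$ rather than $v$, and the iteration is needed to push the path's floor above $0$. Here the finiteness of $Y$, and hence the positivity of the uniform gap $\varepsilon$, is exactly what guarantees that iterating $\mathrm{LIFT}$ escapes to $+\infty$ rather than merely increasing. The comb argument, which crucially uses $\chi(v)\ge 0$ and $\chi(t)>0$, is what finally reconciles the shifted endpoints $t^k$ and $vt^k$ with $1$ and $v$.
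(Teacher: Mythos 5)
The paper does not prove this statement: it is quoted verbatim from Strebel's notes (Theorem A3.1 of \cite{Strebel}) and used as a black box in Proposition \ref{example}, so there is no in-paper proof to compare against. Judged on its own, your argument is correct and is essentially the standard proof of the geometric criterion. The key identity $\nu_\chi(g\cdot p)=\chi(g)+\nu_\chi(p)$ is right, and it cleanly gives $1)\Rightarrow 2)$ via your translation lemma, with the strict inequality supplied by $\chi(t)>0$. For $2)\Rightarrow 1)$, the uniform defect $\varepsilon=\min_{y\in Y}\bigl(\nu_\chi(p_y)-\min\{0,\chi(y)\}\bigr)>0$ (finiteness of $Y$), the estimate $\nu_\chi(\mathrm{LIFT}(P))\ge\nu_\chi(P)+\varepsilon$ for paths of length $\ge 1$, and the final splice $1\sim t^k\sim vt^k\sim v$ through the two $t$-combs are all sound; since $\Gamma_\chi$ is the full subgraph on $\{g:\chi(g)\ge 0\}$, a path of $\Gamma$ all of whose vertices satisfy $\chi\ge 0$ is automatically a path of $\Gamma_\chi$, which is what your last step needs. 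The only points worth making explicit in a written version are the trivial edge cases (a length-zero $P_0$, i.e.\ $v=1$, and the verification that the lift estimate is taken over paths with at least one edge), neither of which affects the argument.
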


\begin{prop}\label{example}
Let
\[
G=\langle a,t,s\ |\ tat^{-1}=a^n,\ sas^{-1}=a^m,\ tst^{-1}s^{-1}=a^r \rangle
\] for some coprime numbers $n,m\geq 2$ and some $r \in \Z$. Then $G$ has property $R_\infty$.
\end{prop}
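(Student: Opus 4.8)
The plan is to identify the character sphere, use the geometric criterion (Theorem \ref{geometriccrit}) to trap $\Sigma^1(G)^c$ inside a short arc, and then invoke Theorem \ref{meuteopolytope}, never resolving $\Sigma^1(G)$ on the interior of that arc. First I would abelianize: the three relations force $(n-1)a=(m-1)a=ra=0$ in $G^{ab}$, so $a$ is torsion (its order divides $\gcd(n-1,m-1,r)$), while the images of $t$ and $s$ generate a free abelian group of rank $2$. Hence $S(G)\simeq S^1$ under $[\chi]\mapsto (\chi(t),\chi(s))/\Vert(\chi(t),\chi(s))\Vert$, and every character kills $a$. Write $\chi_{\alpha,\beta}$ for the character with $\chi(t)=\alpha$, $\chi(s)=\beta$, $\chi(a)=0$.

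Next I would show, via the geometric criterion, that every $[\chi_{\alpha,\beta}]$ with $\alpha<0$ or $\beta<0$ lies in $\Sigma^1(G)$. The presentation admits an isomorphism interchanging $t$ and $s$ (sending the parameters $(n,m,r)$ to $(m,n,-r)$), so it suffices to treat $\alpha<0$, taking $t^{-1}$ as the distinguished generator of positive value. For each $y\in\{a^{\pm1},t^{\pm1},s^{\pm1}\}$ I would exhibit a path from $t^{-1}$ to $yt^{-1}$ whose $\nu_\chi$ exceeds $\nu_\chi((1,y))$, mimicking Case $(1)$ in the proof of Theorem \ref{meusigmagamman}: the relation $tat^{-1}=a^n$ lets one raise the $a$-powers occurring in a correction term to arbitrarily high exponents beneath an application of $t^{-1}$, pushing those terms into the region where $\chi$ is strictly larger. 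The only genuinely new feature is the commutator relation $tst^{-1}s^{-1}=a^r$, which must be absorbed whenever a path crosses an $s^{\pm1}$-edge; since the discrepancy it introduces is itself a power of $a$, it is handled by the same raising device. This establishes $\Sigma^1(G)^c\subseteq K$, where $K$ is the closed arc of $S^1$ joining $(1,0)$ to $(0,1)$ through the first quadrant. (Alternatively, one can observe that $G$ is metabelian with kernel $\mathbb{Z}[1/nm]$, on which $t,s$ act by multiplication by $n,m$, and rerun the Bieri–Strebel computation of Theorem \ref{meusigmagamman} verbatim, the twist $a^r$ being invisible to the centralizer $C(A)$.)

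Finally I would verify that the two endpoints $[\chi_{1,0}]$ and $[\chi_{0,1}]$ genuinely belong to $\Sigma^1(G)^c$ — exactly as in the closing paragraph of Theorem \ref{meusigmagamman}, no admissible correction can make the relevant evaluation positive at these characters. Thus $\Sigma^1(G)^c$ is a nonempty closed subset of $K$ containing both of its endpoints, whence its spherical convex hull is all of $K$. Since $\Sigma^1(G)^c$ is invariant in $S(G)$ and $\varphi^S$ carries convex hulls to convex hulls (Lemma \ref{meulema4}), the arc $K=\mathrm{conv}(\Sigma^1(G)^c)$ is an invariant convex $1$-polytope with the rational vertices $(1,0)$ and $(0,1)$, and it is contained in the open hemisphere $O\!\left(\frac{(1,1)}{\Vert(1,1)\Vert}\right)$. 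Theorem \ref{meuteopolytope} then gives property $R_\infty$. I expect the path constructions for the geometric criterion in the presence of the twist $a^r$ to be the main obstacle: keeping every partial product on the correct side of the $0$-level set of $\chi$ while reconciling the non-commutativity of $t$ and $s$ is precisely where the argument departs from the twist-free computation for $\Gamma_n$, and it is also what lets us conclude without ever determining $\Sigma^1(G)$ in the interior of $K$.
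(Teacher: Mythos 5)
Your overall architecture matches the paper's: identify $S(G)\simeq S^1$, use the geometric criterion to push everything with $\chi(t)<0$ or $\chi(s)<0$ into $\Sigma^1(G)$, show the two endpoints $[\chi_{1,0}]$, $[\chi_{0,1}]$ lie in $\Sigma^1(G)^c$, and feed the resulting invariant arc into Theorem \ref{meuteopolytope}. The abelianization computation, the $t\leftrightarrow s$ symmetry reducing to one sign case, and the final convex-hull/invariance step are all fine. But the step you treat as routine is exactly where the gap is. To show $[\chi_{1,0}],[\chi_{0,1}]\in\Sigma^1(G)^c$ you appeal to ``the closing paragraph of Theorem \ref{meusigmagamman}'' and, in your parenthetical, to the claim that $G$ is metabelian with kernel $\mathbb{Z}[1/nm]$ so that the Bieri--Strebel centralizer computation reruns verbatim. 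That machinery (Proposition 2.1 and formula (2.3) of Bieri--Strebel) only applies when $A=\ker\varphi$ is abelian, i.e.\ when $G$ is metabelian, and for $r\neq 0$ this is precisely what is \emph{not} known here -- the paper says so explicitly in the remark following the proposition. It is not a pedantic worry: in the split model $\mathbb{Z}[1/nm]\rtimes\mathbb{Z}^2$ one computes $[ta^u,sa^v]=a^{vm(n-1)-un(m-1)}$, so the relation $[t,s]=a^r$ is only realizable there when $r$ lies in the ideal of $\mathbb{Z}[1/nm]$ generated by $n-1$ and $m-1$ (e.g.\ it fails for $n=3$, $m=5$, $r=1$); whether the presented group is metabelian at all is unresolved. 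The paper's actual proof of this step is a direct combinatorial argument with the geometric criterion: assuming a path in $\Gamma_\chi$ from $1$ to $t^{-1}at$, it normal-forms the label word by pushing $t$- and $s$-letters, and derives an impossible prime-decomposition identity ($Mn=1$ or $Mn=m^Q$) from $\gcd(n,m)=1$. That argument, or some substitute not relying on metabelianness, is what your proposal is missing.

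A secondary, smaller point: you flag the Case (1) path constructions (absorbing the twist $a^r$ when crossing $s$-edges) as the main obstacle, but these are in fact the easy part -- explicit one-line paths such as $p_s=(t^{-1},a^r s)$ and $p_{s^{-1}}=(t^{-1},s^{-1}a^{-r})$ work directly, since every intermediate vertex differs from the endpoints by elements on which $\chi$ is controlled. You should still write these down, but the heart of the proof is the endpoint exclusion, not the hemisphere inclusion.
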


\begin{proof}
We have the homeomorphism $\mathfrak h : S(G) \to S^1$, sending $[\chi]$ to the normalized of $(\chi(t),\chi(s))$. Let us compute $\Sigma^1(G)$ by the geometric criterion. Fix $X=\{a,t,s\}$ and $Y=\{a,a^{-1},t,t^{-1},s,s^{-1}\}$.
\begin{itemize}

\item[1)]\text{if} $\chi(t)<0$ \text{then} $[\chi] \in \Sigma^1(G)$. Fix $t^{-1}$ such that $\chi(t^{-1})>0$. By using the relations on $G$, one can see that the paths $p_a=(t^{-1},a^n)$, $p_{a^{-1}}=(t^{-1},a^{-n})$, $p_t=(t^{-1},t)$, $p_{t^{-1}}=(t^{-1},t^{-1})$, $p_s=(t^{-1},a^rs)$ and $p_{s^{-1}}=(t^{-1},s^{-1}a^{-r})$ satisfy 2) of \ref{geometriccrit}, so $[\chi] \in \Sigma^1(G)$.

\item[2)]\text{if} $\chi(s)<0$ \text{then} $[\chi] \in \Sigma^1(G)$. Similar to item 1).

\item[3)]\text{if} $\chi(t)=1$ \text{and} $\chi(s)=0$ \text{then} $[\chi] \notin \Sigma^1(G)$.

Suppose by contradiction that $[\chi] \in \Sigma^1(G)$. Then, in particular, there is a path $p=(1,w)$ in $\Gamma_\chi$ from $1$ to $t^{-1}at$. Write
\[
w=t^{k_{11}}s^{k_{12}}a^{r_1}...t^{k_{c1}}s^{k_{c2}}a^{r_c}.
\]Since $p$ is contained in $\Gamma_\chi$, $\chi(t)=1$ and $\chi(s)=0$ we must have
\[
k_{11} \geq 0,\ k_{11}+k_{21} \geq 0,\ ...,\ k_{11}+...+k_{c-1,1} \geq 0\ \text{and}\ k_{11}+...+k_{c1}=0.
\]By using the relations on $G$, we push right $t^{k_{11}}$ until $t^{k_{21}}$, then we push right $t^{k_{11}+k_{21}}$ until $t^{k_{31}}$, and so on. Since $k_{11}+...+k_{c1}=0$, we eliminate from $w$ all the $t$-letters and (after relabeling the $s$ and $a$ powers) we can write $w=s^{k_1}a^{r_1}...s^{k_c}a^{r_c}$ in $G$. But, as a vertex, $w$ must be the end of the path $p$. So we have $w=t^{-1}at$ and therefore
\[
a=twt^{-1}=t(s^{k_1}a^{r_1}...s^{k_c}a^{r_c})t^{-1}=(a^r s)^{k_1}a^{nr_1}...(a^rs)^{k_c}a^{nr_c},
\]or
\[
w'=(a^r s)^{k_1}a^{nr_1}...(a^rs)^{k_{c-1}}a^{nr_{c-1}}(a^rs)^{k_c}a^{nr_c -1}=1
\]in $G$. By projecting this equation onto the $s$-coordinate, we have $k_1+...+k_c=0$. Also, $(a^rs)a^M=a^{mM}(a^rs)$ and $a^M(a^rs)^{-1}=(a^rs)^{-1}a^{mM}$ for every $M \in \Z$. This means that, in $w'$, the entire positive pieces $(a^r s)^{k_i}$ can be pushed right and the negative ones can be pushed left. After doing this, we obtain an expression of the form
\[
(a^rs)^{-\lambda}a^{\alpha_1nr_1+...+\alpha_{c-1}nr_{c-1}+\alpha_c(nr_c -1)}(a^rs)^{\lambda}=1,
\]where each $\alpha_i$ is either $1$ or a positive power of $m$. This easily implies 
\[
\alpha_1nr_1+...+\alpha_{c-1}nr_{c-1}+\alpha_c(nr_c -1)=0.
\]
By putting all the multiples of $n$ above to the left and only $\alpha_c$ on the right, we get either $Mn=1$ (contradiction with the fact $n \geq 2$) or $Mn=m^Q$ for $Q \geq 1$ (contradiction with the fact $\gcd(n,m)=1$). This shows item $3)$.

\item[4)]\text{if} $\chi(t)=0$ \text{and} $\chi(s)=1$ \text{then} $[\chi] \notin \Sigma^1(G)$. Similar to item 3).
\end{itemize}

Now identify $S(G)$ with $S^1$ by the homeomorphism $\mathfrak h$ and let $[\chi_1]$ and $[\chi_2]$ be the points of items 3) and 4), respectively. Items $1)$ and $2)$ showed that the geodesic $\gamma$ in $S(G)$ between these points contains $\Sigma^1(G)^c$. We claim that $\gamma$ is invariant in $S(G)$. In fact, if $\varphi \in Aut(G)$ and $p \in \gamma$, then by Lemma \ref{meulema4} $\varphi^*(p)$ must be in the geodesic between $\varphi^*[\chi_1]$ and $\varphi^*[\chi_2]$. By the $\Sigma$ invariance and by items $3)$ and $4)$, $\varphi^*[\chi_1]$ and $\varphi^*[\chi_2]$ are in $\Sigma^1(G)^c$; therefore, by items $1)$ and $2)$, they must be in $\gamma$. Since $\gamma$ is a convex subset we have $\varphi^*(p) \in \gamma$, which shows our claim. Thus, in $S(G)$ we have $\gamma$ an invariant convex 1-dimensional polytope with the two rational vertices $[\chi_i]$ and the proposition follows from Theorem \ref{meuteopolytope}.
\end{proof}

\begin{remark}
In Proposition \ref{example}, if $r \neq 0$, we do not know whether the group $G$ is metabelian in general. While in such cases the proof of Theorem \ref{meusigmagamman} does not necessarily apply, the geometric criterion does apply. Of course, if $r=0$, we have $G=\Gamma(S)$ for $S=\{n,m\}$, so $G$ is metabelian. Therefore, Proposition \ref{example} illustrates an alternative way to derive property $R_{\infty}$ besides using the BNS invariant $\Sigma^1$.
\end{remark}


\begin{thebibliography}{99}

\bibitem{BNS}
  Bieri, R., Neumann W. D., Strebel, R.,
  \emph{A geometric invariant of discrete groups},
  Inventiones Mathematicae 90 (1987), 451--477.

\bibitem{BS}
Bieri, R., Strebel, R.,
\emph{Valuations and finitely presented metabelian groups}, Proc. London Math. Soc. (3) 41 (1980), no. 3, 439--464.

\bibitem{Bogopolski}
  Bogopolski, O.,
  \emph{Abstract commensurators of solvable Baumslag-Solitar groups},
  Comm. Algebra 40 (2012), no. 7, 2494--2502.

\bibitem{DaciDess}
  Gon\c{c}alves, D.,  Kochloukova, D.,
  \emph{Sigma theory and twisted conjugacy classes},
  Pacific J. Math. 247 (2010), 335--352.

\bibitem{DaciWong}
  Gon\c{c}alves, D., Wong, P.,
  \emph{Twisted conjugacy classes in nilpotent groups},
  J. Reine Angew. Math. 633 (2009), 11--27.

\bibitem{Jiang} B. Jiang, ``Lectures on Nielsen Fixed Point Theory," Contemp. Math. v.14, Amer. Math. Soc., 1983


\bibitem{KL} Kochloukova, D., Lima, F., \emph{On the Bieri-Neumann-Strebel-Renz invariants of residually free groups}, Proc. Edinb. Math. Soc. (2) 63 (2020), no. 3, 807--829.

\bibitem{SankaranWong}
 Sankaran, P., Wong, P.,
  \emph{Twisted conjugacy and commensurability invariance}, J. Group Theory 25 (2022), no. 2, 247â€“264.

\bibitem{Strebel}
  Strebel, R.,
  \emph{Notes on the Sigma invariants},
  Version 2, 2013, PDF avaliable at https://arxiv.org/abs/1204.0214.

\bibitem{TabackWongGBS}
  Taback, J., Wong, P.,
  \emph{A note on twisted conjugacy and generalized Baumslag-Solitar groups},
  Version 3, 2008, PDF avaliable at https://arxiv.org/abs/math/0606284.

\bibitem{TabackWongGamman}
  Taback, J., Wong, P.,
  \emph{Twisted conjugacy and quasi-isometry invariance for generalized solvable Baumslag-Solitar groups},
  J. London Math. Soc. (2) 75 (2007) 705--717.

\end{thebibliography}
\end{document}